\documentclass[dvipsnames]{amsart}

\usepackage[T1]{fontenc}
\usepackage[utf8]{inputenc}

\usepackage{amsmath}
\usepackage{amssymb}
\usepackage{amsthm}
\usepackage{physics}  % for \eval
\usepackage{upgreek}
\usepackage{mathtools}    
\usepackage[final]{microtype}

\usepackage{tikz}
\usepackage{tikz-cd}
\usetikzlibrary{arrows, babel, 
  decorations.text, patterns, 
  patterns.meta, decorations.pathreplacing}
\tikzset{commutative diagrams/.cd,
  arrow style=tikz, diagrams={>=stealth}}

\usepackage{url}
\usepackage[%pdftex,
            pdfnewwindow=false,
            colorlinks=true,
            allcolors=black,
            urlcolor=red,
            citecolor=green!50!black]{hyperref}
\usepackage[safeinputenc, 
            backend=biber, 
            url=false,
            doi=false,
            isbn=false,
            bibencoding=auto,
            style=alphabetic, 
            giveninits=true,
            backref=true]{biblatex}
\DeclareFieldFormat[article, incollection]{title}{\mkbibemph{#1}} 
\AtEveryBibitem{%
   \ifentrytype{misc}{%
      \clearfield{year}%
   }{}%
}

\DefineBibliographyStrings{english}{%
  backrefpage = {$\uparrow$},
  backrefpages = {$\uparrow$},}

\AtBeginBibliography{\small}
\renewbibmacro{in:}{}

\usepackage{CJKutf8}
\begin{CJK*}{UTF8}{goth}
\gdef\yama{\mbox{\textbf{山}}}
\gdef\ten{\mbox{\textbf{天}}}
\end{CJK*}

\usepackage{longtable}
\usepackage{booktabs}
\usepackage{caption}
\usepackage{multirow}

\usepackage{enumitem}
\setenumerate[1]{label={{\bf \arabic*}.}, itemsep=3pt}
\setenumerate[2]{label=(\alph*)}
\numberwithin{equation}{section}
\numberwithin{table}{section}

\usepackage{fancyhdr}

\usepackage[includehead, includefoot, 
  left=3cm, right=3cm, top=2cm, bottom=3cm, 
  footskip=5mm]{geometry}

\theoremstyle{plain}
\newtheorem{theorem}{Theorem}[section]
  \newtheorem{proposition}[theorem]{Proposition}
  \newtheorem{lemma}[theorem]{Lemma}
  \newtheorem{corollary}[theorem]{Corollary}
  \newtheorem{conjecture}[theorem]{Conjecture}

\newtheorem*{theorem*}{Theorem}
\newtheorem{introtheorem}{Theorem}

  \newtheorem{introquestion}[introtheorem]{Question}

\theoremstyle{remark}
\newtheorem{remark}[theorem]{Remark}
\newtheorem*{remark*}{Remark}
\newcommand{\FF}{\mathbb{F}}
\newcommand{\NN}{\mathbb{N}}
\newcommand{\PP}{\mathbb{P}}
\newcommand{\QQ}{\mathbb{Q}}

\newcommand{\ZZ}{\mathbb{Z}}

\newcommand{\Qbar}{\overline{\mathbb{Q}}}

\newcommand{\OO}{\mathcal{O}}
\newcommand{\OK}{\mathcal{O}_K}
\newcommand{\pp}{\mathfrak{p}}
\newcommand{\bmu}{\mathbf{\upmu}}
\newcommand{\mf}{\mathfrak}

\DeclareMathOperator{\Aut}{\textnormal{\textsf{Aut}}}

\DeclareMathOperator{\Gal}{\textnormal{\textsf{Gal}}}
\DeclareMathOperator{\GL}{\textnormal{\textsf{GL}}}

\DeclareMathOperator{\ord}{\textnormal{\textsf{ord}}}

\let\trace\relax
\DeclareMathOperator{\trace}{\textnormal{\textsf{tr}}}

\newcommand{\Sage}{{\tt SageMath\ }}
\newcommand{\Mod}[1]{\ (\mathrm{mod}\ #1)}
\newcommand{\defeq}{\vcentcolon=}

\title{Heavenly Elliptic Curves over Cubic Number Fields}

\author[S.~O'Hara]{Suzanne O'Hara}
\address{Department of Mathematics and Computer Science \\ Wesleyan University \\ Middletown CT, 06459 \\ United States}
\email{seohara@wesleyan.edu}

\begin{document}

\begin{abstract}
  The study of heavenly abelian varieties is motivated by a question of Ihara. When an elliptic curve $E/K$ is heavenly at $\ell$, the extension $K(E[\ell^\infty])/K(\bmu_\ell^\infty)$ is pro-$\ell$ and unramified away from $\ell$. These are the same arithmetic conditions as the fixed field of the kernel attached to pro-$\ell$ \'etale covers of the projective line over $K$ minus three points. In this paper we study heavenly elliptic curves defined over cubic number fields. Following the work on McLeman and Rasmussen in the quadratic case, we find that there is a wider range of possible behaviors for the trace of a \textit{balanced} elliptic curve in the cubic case. In this setting, we introduce a further distinction between balanced and \textit{totally balanced} curves. With this, we show that trace of the representation on the $\ell$-torsion for totally balanced elliptic curves is non-surjective. We also compute a bound on primes $\ell$ after which any heavenly elliptic curve defined over a cubic number field must be balanced. Finally we compare the trace behavior of totally balanced elliptic curves with CM elliptic curves.

\end{abstract}

\begin{CJK*}{UTF8}{goth}

\maketitle

\section{Introduction}\label{sec:intro}

    Let us begin by fixing an algebraic closure $\Qbar$ of $\QQ$, so that any algebraic extension of $K/\QQ$ may be viewed as sitting inside of $\Qbar$. For any integer $N > 1$ let $\bmu_N$ denote the $N$-th roots of unity, and $\bmu_{N^\infty} = \bigcup_{i} \bmu_{N^i}$. For any number field $K$ and rational prime $\ell$, we can define the following fields: $\ten \defeq \ten(K,\ell)$\footnote{The kanji \ten \, is pronounced "ten" and translates to \textit{heaven}.} the maximal pro-$\ell$ extension of $K(\bmu_\ell^\infty)$ unramified away from $\ell$, and $\yama \defeq \yama(K,\ell)$\footnote{The kanji \yama \, is pronounced "yama" and tranlates to \textit{mountain}.} the fixed field of the kernel of the outer Galois representation attached to pro-$\ell$ covers of $\PP_K^1-\{0,1,\infty\}$. We call an abelian variety $A/K$ \textit{heavenly} at $\ell$ if $K(A[\ell^\infty]) \subseteq \ten$. As being heavenly is an isomorphism invariant over the field of definition, we consider varieties up to $K$-isomorphism class, denoted $[A]_K$.

For any number field $K$ and dimension $g > 0$ we let $\mathcal{H}(K,g)$ be the set of pairs $([A]_K,\ell)$ such that $A/K$ is heavenly at $\ell$. When $K$ is a cubic number field, it is known that $\mathcal{H}(K,1)$ is finite by \cite[Prop. 7.5]{RT-constrainedtorsion}. Based on the work of McLeman and Rasmussen in the quadratic \cite{quadfields}, we now wish to consider whether there remain finitely many examples of heavenly elliptic curves for any prime $\ell$ as we let $K$ vary through all Galois cubic fields. The reduction to Galois fields is needed to show that \textit{totally balanced} curves have similar trace behavior to elliptic curves with complex multiplication. We show that when a heavenly elliptic curve is totally balanced, the representation of the Galois group $G_K$ on the $\ell$-torsion of the curve has a non-surjective trace modulo $\ell$.

We define the following family of sets. Let $\mathcal{H}^\circ(n,1)_B$ be the set of pairs $([E]_K,\ell)$ where
    \begin{enumerate}
        \item[(a)] $\ell \geq B$ is prime,
        \item[(b)] $[K:\QQ] = n$ is a Galois extension,
        \item[(c)] $E/K$ is an elliptic curve heavenly at $\ell$,
        \item[(d)] $E\times_K\Qbar \not\cong E_0 \times_\QQ \Qbar $ for any $E_0/\QQ$ which is heavenly at $\ell$.
    \end{enumerate}

A main question of interest is the following:

\begin{introquestion}\label{introq: H finite}
Is $\mathcal{H}^\circ (3,1)_{11}$ a finite set?
\end{introquestion}

In section \ref{sec:finite-fibers} we will do a further exploration of $\mathcal{H}^\circ(n,1)_B$ for further pairs of integers $B,n$ where $n$ is odd. We see this restriction to primes $\ell \geq 11$, as \cite[Thm. 3.2]{quadfields} shows that it is possible to parameterize infinite families of heavenly curves for $\ell \leq 7$. We also remove curves that arise as the base-change of some example over $\QQ$ to avoid infinite families of curves. If $E/\QQ$ is heavenly at $\ell$ and $K/\QQ$ is a finite degree extension, then $E \times_\QQ K$ is heavenly at $\ell$ as well. The set $\mathcal{H}(\QQ,1)$ is known explicitly. So it is possible to pick some $([E']_\QQ, \ell) \in \mathcal{H}(\QQ,1)$ and an infinite set of distinct cubic fields $\{K_i\}_{i \in \mathbb{N}}$ and generate infinitely many distinct pairs of the form $([E'\times_\QQ K_i]_{K_i}, \ell)$ which are heavenly at $\ell$ over a cubic field.

Instead of indexing the curves themselves, we may also choose to index over the possible fields where examples of heavenly elliptic curves my arise. Let $\mathcal{R}(n,1)_B$ be the set of pairs $(\ell,K)$ such that
    \begin{enumerate}
        \item[(a)] $\ell \geq B$ is prime,
        \item[(b)] $[K:\QQ] = n$ is a Galois extension,
        \item[(c)] There exists an elliptic curves $E/K$ heavenly at $\ell$, such that
        \item[(d)] $E\times_K\Qbar \not\cong E_0 \times_\QQ \Qbar $ for any $E_0/\QQ$ which is heavenly at $\ell$.
    \end{enumerate}

\begin{introquestion}\label{introq: R finite}

    Is $\mathcal{R}(3,1)_{11}$ a finite set?
\end{introquestion}

Note that a pair $(\ell, K) \in \mathcal{R}(n,1)_B$ exactly when there exists a pair $([A]_K,\ell) \in \mathcal{H}(n,1)_B$. 

A positive answer to either questions A or B are equivalent to each other. This is easy to see as if $\sharp\mathcal{H}^\circ(3,1)_{11} < \infty$, then there are necessarily only finitely many pairs $(\ell, K) \in \mathcal{R}(3,1)_{11}$ by the observation above. If $\sharp\mathcal{R}(3,1)_{11} < \infty$, then we can argue each pair $(\ell, K)$ gives rise to only finitely many pairs $([A]_K,\ell) \in \mathcal{H}^\circ(3,1)_{11}$. This is because having good reduction away from $\ell$ is a necessary condition for a curve $A/K$ to be heavenly. The Shafarevich Conjecture (\cite{faltings83}, \cite{zahrin:unpolarized}) guarantees there only exists finitely many $K$-isomorphism classes of elliptic curves with good reduction away from $\ell$.

The figure below illustrates a region representing the pairs $(\ell,K) \in \mathcal{R}(3,1)_{11}$ as a subset of $\NN \times \mathcal{F}$ where $\mathcal{F}$ orders Galois cubic fields by discriminant, breaking ties arbitrarily. As noted before, there may be infinite families of heavenly elliptic curves for each $\ell$ when $\ell < 11$ so this region is indicated in red. The existence of bounding curve to the right is a visualization of the previously noted result \cite[Prop. 7.5]{RT-constrainedtorsion}. We have $\sharp \mathcal{H}(K,1) < \infty$ for any cubic field. Hence there may be some biggest prime $\ell(K)$ past which there are no examples of heavenly curves $E/K$ with $\ell > \ell(K)$. Since there are only finitely many cubic fields of some fixed discriminant $\Delta$,\cite[Thm 2.16]{algebraic_number_neukrich} it is simple to note then that we may extend this work a bound $\ell(\Delta)$. Therefore, for each discriminant $\Delta$, there are no examples of elliptic curves defined over cubic fields which are heavenly at $\ell > \ell(\Delta)$. This is indicated by the shaded gray region to the right of this curve.

We may then address the vertical bounding curve labeled \cite[Thm. 3.1]{quadfields}. This result implies that over all elliptic curves defined over a cubic number field, there are only finitely many, up to $\Qbar$ isomorphism classes, which are heavenly at some fixed $\ell \geq 11$. Now we may say there is also a bound on discriminants,  $\Delta(\ell)$, which depends on our chosen prime $\ell$ such that for any $\Delta > \Delta(\ell)$ there are no examples of elliptic curves $E/K$ which are heavenly at $\ell$ with the cubic field $K$ having discriminant $\Delta$. We again indicate this with the shaded gray region above the curve.

These two results can be though of as "horizontal" and "vertical" finiteness results respectively. Restricting to considering primes $\ell \geq 11$, visualize this as any vertical or horizontal line passing through this figure intersecting only finitely many ordered pairs $(\ell, K)$. Hence showing that the region $\mathcal{R}(3,1)_{11}$ is also bounded would be enough to conclude that there are only finitely many classes of heavenly elliptic curves over cubic Galois fields up to $\Qbar$ isomorphism. Extending this result to counting $K$-isomorphism classes is done in \S \ref{sec:finite-fibers}.

In \S \ref{sec:arithmetic} we prove a uniform bound after which all heavenly elliptic curves must be balanced. See \cite{OHara26} for the \texttt{SageMath} code used in these calculations. This is illustrated in Figure \ref{fig:H31_map} by the dotted line. This bound is uniform as it only depends upon the dimension of the variety and the degree of the field. The other vertical line illustrates a result of Bourdon.  For any $\ell > 907$, there does not exist a CM elliptic curve defined over a cubic field $K$ with $K(E[\ell^\infty])$ a pro-$\ell$ extension of $K(\mu_\ell)$ \cite{bourdon:uniformCM}. An immediate consequence of Bourdon's result is that there are no heavenly CM curves with $\ell > 907$.

At the current moment, it is not known if we can compute a similar bound to determine when curves must be totally balanced. Such a result would be needed if we wish to use Bourdon's result to complete a bounded region for $\mathcal{R}(3,1)_{11}$, as we do not expect only knowing a curve is balanced is enough to ensure it has a trace that behaves similarly to CM curves.

%Existence of Bourdon's bound is Theorem 1, but I haven't listed a specific result here because the computatino for n = 3 is not done until later in the paper.

\begin{figure}[ht!]
\centering
\begin{tikzpicture}[>=stealth, scale=0.15]

% tiny ell strip
\draw[fill=red!15, red!15] (0,0) rectangle (6, 50);
\node[fill=red!15] at (2.5, 20) {\rotatebox{90}{\small \textcolor{red}{\sffamily infinitely many classes}}};

% tiny ell line
\draw[->, thick, black] (6,0) -- (6,51);
  
% balanced line
%\draw[fill=blue!10, blue!10] (5,20) rectangle (50,50);
%\draw[fill=blue!10, blue!10] (27, 0) rectangle (50, 50);
\node[blue, anchor=south east] at (15.4, 0.3) {\rotatebox{90}{\small \sffamily Proposition 2.7}};

% Bourdon-CM line -- fill blue first, then add vertical line, text
\draw[fill=blue!15, blue!25] (27,0) rectangle (50,50);
\node[blue, anchor=south east] at (27.4, 0.3) {\rotatebox{90}{\small \sffamily Bourdon (2015)}};
%  \node[blue, anchor=south,text width=.5in] at (27.4, 1.4) {\rotatebox{90}{\tiny complex}};

% Bach-Sorenson
  \draw[->, thick, black, fill=gray!40,
  , postaction={decorate, decoration={raise=1.0ex, text along path, text align={left indent={0.65\dimexpr\pgfdecoratedpathlength\relax}}, text={|\small\sffamily| RT17 Prop. 7.5}}}] 
  (50,0) -- (34, 0) .. controls (34, 10) and (36, 30) .. (50.4, 40);
%  \node at (44, 15) {\large $X_{1}$};

  % McLeman-Rasmussen
  \draw[->, thick, black, fill=gray!40, postaction={decorate, decoration={raise=-2.0ex, text along path, text align={left indent={0.73\dimexpr\pgfdecoratedpathlength\relax}}, text={|\small\sffamily|MR25 Thm 3.1}}} ] (6, 50) -- (6, 28) .. controls (24, 28) and (36, 42) .. (40, 50.4);
%  \draw[->, thick, red] (5, 0) -- (5, 51);

%  \node at (20, 40) {\large $X_{2}$};
%  \node at (32, 28) {\large $X_{3}$};
  
%%%  Some vertical lines

  % balanced / Conjecture A
  \draw[->, thick, dashed, blue] (15,0) -- (15,51);

  % CM / Bourdon
  % \draw[thick, blue] (27,0) -- (27,32.85);
  % \draw[->, thick, blue] (27, 35.1) -- (27, 51);
  \draw[->, thick, blue] (27, 0) -- (27, 51);

  % Daniels-Lozano-Robledo
%  \draw[->, thick, purple] (5,20) -- (51,20);
  
  % x-axis (the prime ell)
  \draw[thick, black] (0,0) -- (18,0);
  \draw[thick, black] (20,0) -- (30,0);
  \draw[->, thick, black] (32,0) -- (51,0) node[right] {$\ell$};

  \foreach \xcoord/\xlabel in {1/2, 2/3, 4/5, 6/7, 8/11, 15/73, 27/907} {
    \draw[black, fill] (\xcoord, 0) circle (0.2) node[below] {\tiny $\xlabel$}; % node[above, gray] {\tiny $\xcoord$};
  }

%  \draw[black, fill] (0, 20) circle (0.2) node[left] {\tiny $89$};

  % breaks
  \draw[thin, black] (18, 1) --
      ++(0.25, -0.25) -- ++(-0.25, -0.25) --
      ++(0.25, -0.25) -- ++(-0.25, -0.25) --
      ++(0.25, -0.25) -- ++(-0.25, -0.25) --
      ++(0.25, -0.25) -- ++(-0.25, -0.25);

  \draw[thin, black] (20, 1) --
      ++(0.25, -0.25) -- ++(-0.25, -0.25) --
      ++(0.25, -0.25) -- ++(-0.25, -0.25) --
      ++(0.25, -0.25) -- ++(-0.25, -0.25) --
      ++(0.25, -0.25) -- ++(-0.25, -0.25);

  \draw[thin, black] (30, 1) --
      ++(0.25, -0.25) -- ++(-0.25, -0.25) --
      ++(0.25, -0.25) -- ++(-0.25, -0.25) --
      ++(0.25, -0.25) -- ++(-0.25, -0.25) --
      ++(0.25, -0.25) -- ++(-0.25, -0.25);

  \draw[thin, black] (32, 1) --
      ++(0.25, -0.25) -- ++(-0.25, -0.25) --
      ++(0.25, -0.25) -- ++(-0.25, -0.25) --
      ++(0.25, -0.25) -- ++(-0.25, -0.25) --
      ++(0.25, -0.25) -- ++(-0.25, -0.25);
  
  % y-axis (discriminant)
  \draw[->, thick, black] (0, 0) -- (0, 51) node[above] {$\mathcal{F}$};
  % \node[black, anchor=south east] at (0, 12) { \rotatebox{90}{{\tiny absolute value quadratic fields $k$ (ordered by increasing values of $\left|\Delta_{k/\QQ}\right|$)}} };
\end{tikzpicture}
\caption{\small The set $\mathcal{R}(3,1)_{11}$ inside $\NN \times \mathcal{F}$}\label{fig:H31_map}
%is contained within the blue and white regions.}\label{fig:H21_map}
\end{figure}

It remains an open question whether for a fixed prime $\ell$ and $K$ a Galois extension of $\QQ$ of odd degree, if all elliptic curves $E/K$ which are totally balanced at $\ell$ must also have complex multiplication. This study differs from the quadratic case in this aspect. When working over a quadratic extension of $\QQ$, knowing that an elliptic curve is balanced is enough to determine uniquely the form of the Galois representation on the $\ell$-torsion points. However, it may not be the case knowing a curve $E$ is balanced lets us conclude that a unique form of the representation on $\ell$-torsion exists when our number field is of higher degree. This distinction is discussed more thoroughly in \S \ref{sec:odd-degree}. 

\section*{Acknowledgments}

I am incredibly thankful to my Ph.D. advisor Christopher Rasmussen for all of his support and guidance during this project. I would also like to thank Cam McLeman for his assistance in understanding arguments using Gr{\"o}\ss encharacters.

\section*{AI Statement}
No AI was used in the writing or editing of this paper or the corresponding code.
    
\section{Arithmetic of Heavenly Varieties}\label{sec:arithmetic}

    \subsection{General Notation.}

For a number field $K$, let $G_K := \Gal(\Qbar/K)$ and let $\OK$ denote the ring of integers of $K$. For a prime number $\ell$, let $\chi: G_\QQ \to \FF_\ell^\times$ be the $\ell$-adic cyclotomic character modulo $\ell$. As is well known, for all $\zeta \in \mu_\ell$ and $\sigma \in G_\QQ$ we have $\zeta^\sigma = \zeta^{\chi(\sigma)}$. Let $L/K$ be a finite extension of fields and suppose $\mf{P}$ is a prime of $L$ above $\pp$. We denote the ramification index of $\mf{P}$ over $\pp$ as $e_{\mf{P}\vert\pp}$ and the inertial degree, $[\OO_L/\mf{P} : \OK/\pp]$, as $f_{\mf{P}\vert\pp}$. However in the case where the base field $K = \QQ$, we simply write $e_\mf{P}$ and $f_\mf{P}$. For a prime $\pp$ of $K$, we let $\textbf{N}\pp$ be the absolute norm of $\pp$ and $\FF_\pp$ denotes the finite field $\mathcal{O}_K/\pp$. Finally, let $K_\mf{p}$ denote the completion of $K$ at the prime ideal $\mf{p}$.

For an abelian variety $A/K$ and integer $m \geq 1$. We denote by $A[m]$ the group of $m$-torsion points of $A$, and we let $\rho_{A,m} \colon G_K \to \mathrm{GL}_{2g}(\ZZ/m\ZZ)$ be the $G_K$ representation of the $m$-torsion. Such a representation requires a choice of basis for $\mathrm{GL}_{2g}(\ZZ/m\ZZ)$, hence the image of such a map is only defined up to conjugacy. 

For an elliptic curve $E/K$ with conductor $\mf{N}$ and good reduction at $\pp$, $a_\pp = a_\pp(E) := \textbf{N}_\pp + 1 - \sharp E(\FF_\pp)$. For $\theta_\pp \in G_K$, a Frobenius element of $\pp$, it is well known that $a_\pp \equiv \rho_{E,\ell}(\theta_\pp) \Mod{\ell}$. The map $\rho_{E,\ell}$ maps into a finite group, and hence the representation must factor through a finite index subgroup of $G_K$, namely $\ker(\rho_{E,\ell})$. The Chebortarev Density Theorem \cite[Thm. 13.4]{algebraic_number_neukrich} guarantees that each coset of $\ker(\rho_{E,\ell})$ contains a representative that is a Frobenius for some prime. In fact, as E may have bad reduction at a finite number of places, we can guarantee that each coset has a Frobenius representative at a prime of good reduction. As $\ker(\rho_{E,\ell})$ has only finitely many cosets, and the Frobenius elements are dense in $G_K$, each coset $\sigma \ker(\rho_{E,\ell})$ must contain infinitely many Frobenius representatives. This still holds after we exclude the finitely many Frobenius elements for the primes of bad reduction.  Consequently, $\trace(\rho_{E,\ell}(G_K)) = \{a_\pp(E) \Mod{\ell} : \pp \nmid  \ell\mf{N}\}$.

For an abelian variety $A/K$, let $e(A/K;\mf{l})$ denote the index of semistable reduction for $A$ over the localization of $K$ at $\mf{l}$. That is, for $K_\mf{l}^\mathrm{unr}$ the maximal unramified extension of $K_\mf{l}$, let $K_\mf{l}^\mathrm{ss}$ be the minimal extension of $K_\mf{l}^\mathrm{unr}$ over which $A$ obtains semistable reduction. Then $e(A/K;\mf{l})$ is the degree $[K_\mf{l}^\mathrm{ss}:K_\mf{l}^\mathrm{unr}]$\cite{SGA7-I}. We set $e_A(\mf{l}) : = e_{\mf{l}}\cdot e(A/K;\mf{l})$. When there is no confusion on the prime and abelian variety of interest, we may simply write $e$ for $e_A(\mf{l})$.

We will record here for future reference a result that enumerates the possible index of semistable reduction for elliptic curves.

\begin{proposition}\cite[Expos\'e IX]{SGA7-I}\label{prop: semistable options}
    Let $E/K$ be an elliptic curve, $\ell > 3$ a prime, and $\mf{l}$ a prime of $K$ above $\ell$. Then $e(E/K;\mf{l}) \in \{1,2,3,4,6\}$.
\end{proposition}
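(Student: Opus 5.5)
The plan is to reduce to the local Galois theory of the wild/tame inertia acting on the Tate module, over the maximal unramified extension $K_\mf{l}^\mathrm{unr}$ of $K_\mf{l}$. By the Néron--Ogg--Shafarevich/Grothendieck criterion (SGA7, Exposé IX), $E$ acquires semistable reduction over a finite extension $L/K_\mf{l}^\mathrm{unr}$ exactly when the inertia group of $L$ acts unipotently on $T_p(E)$ for some auxiliary prime $p \neq \ell$. Taking $p$ to be a prime different from $\ell$, the image of the inertia $I_\mf{l}$ acting on $T_p(E)$ modulo its unipotent part is a finite group $\Phi$, and $K_\mf{l}^\mathrm{ss}$ is the fixed field of the kernel of $I_\mf{l}\to\Phi$. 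Therefore $e(E/K;\mf{l}) = \sharp \Phi$, and the task is to show $\sharp\Phi \in \{1,2,3,4,6\}$.

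First, I would show that $\Phi$ is cyclic. Since $\ell > 3$, I expect the order of $\Phi$ to be prime to $\ell$. The reason is that $\Phi$ embeds, via its action on $T_p(E)$ or on the component group of the reduction, into a finite subgroup of $\GL_2(\ZZ_p)$ commuting with the Weil pairing. It is therefore a finite subgroup of $\mathrm{SL}_2$ realized over $\QQ$, because characteristic polynomials of inertia elements have rational integer coefficients independent of $p$. Such a group can only have order divisible by primes $q$ with $q-1 \mid 2$, so $q \le 3 < \ell$. Hence $\Phi$ is a quotient of the tame inertia group, which is procyclic, and so $\Phi$ is cyclic.

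Next, I would let $\sigma$ generate $\Phi$ and act on $V_p(E)$. Its characteristic polynomial has integer coefficients (Grothendieck's independence-of-$p$ for inertia) and determinant $1$, since inertia acts trivially on $\bmu_{p^\infty}$ at a prime $\mf{l}\nmid p$. So $\sigma$ has finite order $m$ and eigenvalues $\zeta,\zeta^{-1}$, primitive $m$-th roots of unity, with $\zeta+\zeta^{-1}\in\ZZ$. This forces $\varphi(m)\le 2$, so $m\in\{1,2,3,4,6\}$. In the multiplicative-reduction-after-twist case the finite part is $\pm1$, giving $m\le 2$; this fits the same list.

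The main obstacle is the first step, namely justifying rationality and independence of $p$ for the inertia action, and thereby excluding wild contributions. I would quote SGA7 Exposé IX (Thm. 4.3 and Cor. 4.4) for both, noting that wildness can only arise for residue characteristic $2$ or $3$, which the hypothesis $\ell>3$ rules out.
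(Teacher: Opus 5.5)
Your proposal is correct. The paper gives no argument beyond citing SGA7, Expos\'e IX, and what you have written is the standard derivation behind that citation: Grothendieck's unipotence criterion identifies $e(E/K;\mf{l})$ with $\sharp\Phi$; integrality of inertial characteristic polynomials forces every prime divisor of $\sharp\Phi$ to be $2$ or $3$; so for $\ell>3$ the action is tame and $\Phi$ is cyclic; and $\varphi(m)\le 2$ then gives $m\in\{1,2,3,4,6\}$.

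Two wording fixes. Integral characteristic polynomials do not make $\Phi$ ``realized over $\QQ$'': the quaternion group $Q_8$, which occurs as $\Phi$ in residue characteristic $2$, has integral traces but does not embed in $\mathrm{GL}_2(\QQ)$. You only ever use trace integrality, so nothing breaks. The ``component group'' remark is beside the point; taking $\Phi$ to be the image of inertia under the semisimplification of $\rho|_{I}$ handles the potentially multiplicative case uniformly with the potentially good one.
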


\subsection{Arithmetic of Heavenly Varieties}
While the original definition of an elliptic curve $E/K$ being heavenly at $\ell$ is motivated by the construction of $\ten$, there is an equivalent definition which we utilize throughout the paper. While this paper focuses only on elliptic curves, we state the results in this section for abelian varieties of any dimension $g$. The lemma below can be traced back to \cite{RT-constrainedtorsion}, however we point to \cite{quadfields} for a self-contained proof.
\begin{lemma}\cite[Lem. 2.4]{quadfields}\label{lem:heavenly equiv def}
    Let $K$ be a number field and $A/K$ an abelian variety. Let $\ell$ be a rational prime and $S$ the set of prime of $K$ dividing $\ell$. Then $A$ is heavenly at $\ell$ if and only if
    \begin{enumerate}
        \item[(a)] $A$ has good reduction outside of $S$, and
        \item[(b)] $[K(A[\ell]):K(\mu_\ell)]$ is a power of $\ell$.
    \end{enumerate}
\end{lemma}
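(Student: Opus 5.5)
We prove the two implications separately, working throughout with the tower $K \subseteq K(\mu_\ell) \subseteq K(\mu_{\ell^\infty}) \subseteq \ten$ and the fields $K(A[\ell]) \subseteq K(A[\ell^\infty])$. Two standard inputs do most of the work: the Néron--Ogg--Shafarevich criterion, and the fact that $\ker\bigl(\GL_{2g}(\ZZ_\ell)\to\GL_{2g}(\FF_\ell)\bigr)$ is a pro-$\ell$ group.

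\textbf{Heavenly implies (a) and (b).} Suppose $K(A[\ell^\infty])\subseteq \ten$. The field $\ten$ is unramified over $K(\mu_{\ell^\infty})$ away from $\ell$, and $K(\mu_{\ell^\infty})/K$ is itself unramified away from $\ell$. Hence $K(A[\ell^\infty])/K$ is unramified at every prime $\pp\notin S$. So the inertia group at such $\pp$ acts trivially on $T_\ell A$, and Néron--Ogg--Shafarevich gives good reduction at $\pp$, which is (a). For (b), note that $K(A[\ell])K(\mu_\ell)$ lies in $\ten$, which is pro-$\ell$ over $K(\mu_{\ell^\infty})$. Also $K(\mu_{\ell^\infty})/K(\mu_\ell)$ is pro-$\ell$, since its Galois group embeds in $1+\ell\ZZ_\ell$. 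So $\ten/K(\mu_\ell)$ is pro-$\ell$, and therefore the finite subextension $K(A[\ell])/K(\mu_\ell)$ has $\ell$-power degree. Here we use that $\mu_\ell\subseteq K(A[\ell])$ by the Weil pairing.

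\textbf{(a) and (b) imply heavenly.} We first show that $K(A[\ell^\infty])/K(\mu_\ell)$ is pro-$\ell$. The group $\Gal(K(A[\ell^\infty])/K(A[\ell]))$ embeds in $\ker(\GL_{2g}(\ZZ_\ell)\to\GL_{2g}(\FF_\ell))$, which is pro-$\ell$. Combining this with (b) and multiplicativity of degrees in towers of finite subextensions shows every finite layer has $\ell$-power degree over $K(\mu_\ell)$. Consequently the compositum $K(A[\ell^\infty])K(\mu_{\ell^\infty})$ is a pro-$\ell$ extension of $K(\mu_{\ell^\infty})$. For ramification, hypothesis (a) and Néron--Ogg--Shafarevich show $K(A[\ell^\infty])/K$ is unramified outside $S$, so the compositum is unramified away from $\ell$ over $K(\mu_{\ell^\infty})$. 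By maximality of $\ten$, it lies in $\ten$, so $A$ is heavenly.

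\textbf{Main obstacle.} The delicate step is the group-theoretic bookkeeping in the converse. We must check that a pro-$\ell$ extension of $K(\mu_\ell)$ stays pro-$\ell$ after lifting to $K(\mu_{\ell^\infty})$; this holds because the Galois group of the compositum over $K(\mu_{\ell^\infty})$ injects into $\Gal(K(A[\ell^\infty])/K(\mu_\ell))$. We must also keep the roles of $K(\mu_\ell)$ and $K(\mu_{\ell^\infty})$ straight, in particular handling the prime-to-$\ell$ part of $\Gal(K(\mu_\ell)/K)$ correctly.
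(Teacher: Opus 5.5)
Your proof is correct. Néron--Ogg--Shafarevich gives the equivalence between (a) and $K(A[\ell^\infty])/K$ being unramified away from $\ell$. Condition (b) follows from three facts: $\ker\bigl(\GL_{2g}(\ZZ_\ell)\to\GL_{2g}(\FF_\ell)\bigr)$ is pro-$\ell$, $\Gal(K(\bmu_{\ell^\infty})/K(\bmu_\ell))$ is pro-$\ell$, and $\bmu_\ell\subseteq K(A[\ell])$ via the Weil pairing.

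The paper gives no proof of its own here. It defers to \cite[Lem. 2.4]{quadfields} (going back to \cite{RT-constrainedtorsion}), and your argument is essentially the standard one used there.
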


For any number field $K$, $g > 0$, and prime $\ell$, we denote by $\mathcal{H}(K,g,\ell)$ the set of $K$-isomorphism classes of abelian varieties of dimension $g$ which are heavenly at $\ell$. The work of Faltings and Zarhrin on the Shafarevich Conjucture (\cite{faltings83}, \cite{zahrin:unpolarized}) guarantees that for a fixed field $K$, dimension $g$, and finite set of primes $S$ the number of $K$-isomorphism classes of abelian varieties of dimension $g$ with good reduction away from $S$ must be finite. Therefore Lemma \ref{lem:heavenly equiv def} implies that the set $\mathcal{H}(K,g,\ell)$ must always be finite. Rasmussen and Tamagawa conjecture that a finiteness result holds even when the prime $\ell$ is allowed to vary. Let $\mathcal{H}(K,g) \defeq \{([A]_K,\ell) : A \in \mathcal{H}(K,g,\ell)\}$. It is important to index both the curve and the prime as there are examples of curves,necessarily with everywhere good reduction, which are heavenly at more than one prime. In principal, this can occur in the cubic case as it does in the quadratic case. The interested reader can find an example defined over $\QQ(\sqrt{6})$ in \cite[\S 6]{quadfields}.

\begin{conjecture}\cite{RT-constrainedtorsion}
    Let $K$ be a number field and $g > 0$. The set $\mathcal{H}(K,g)$ is finite. Equivalently, $\mathcal{H}(K,g,\ell) = \varnothing$ for $\ell \gg 0$.
\end{conjecture}

Under the assumption of the generalized Riemann hypothesis, \cite{RT-constrainedtorsion} proves this conjecture. While this conjecture has also been proven unconditionally for a variety of cases, it is of particular interest to this paper that $\sharp\mathcal{H}(K,1) < \infty $ for $[K:\QQ] \leq 3$ without using GRH \cite{primepowertorsion, RT-constrainedtorsion}.

\begin{proposition}\cite[Lemma 3]{primepowertorsion}\label{prop: heavenly upper triangular}
    Suppose $([A]_K, \ell) \in \mathcal{H}(K,g)$. For each $r$ with $1 \leq r \leq 2g$, there exists $i_r \in \ZZ$ satisfying $0 \leq i_r < \sharp\chi(G_K)$ such that
    \[\rho_{A,\ell} \sim \begin{pmatrix}
        \chi^{i_1} & \star & \ldots & \star\\
        & \chi^{i_2} & \ldots & \star\\
        & & \ddots & \vdots\\
        & & & \chi^{i_{2g}}
    \end{pmatrix}.\]
\end{proposition}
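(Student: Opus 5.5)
We prove the statement through the criterion of Lemma \ref{lem:heavenly equiv def}: since $A$ is heavenly at $\ell$, the extension $K(A[\ell])/K(\bmu_\ell)$ has $\ell$-power degree. Set $G \defeq \rho_{A,\ell}(G_K) \subseteq \GL_{2g}(\FF_\ell)$ and $H \defeq \rho_{A,\ell}(G_{K(\bmu_\ell)})$. The subgroup $G_{K(\bmu_\ell)}$ is exactly the kernel of $\chi|_{G_K}$, because $\det\rho_{A,\ell} = \chi^g$ by the Weil pairing and $\bmu_\ell \subseteq K(A[\ell])$. Hence $H$ is normal in $G$, $H \cong \Gal(K(A[\ell])/K(\bmu_\ell))$ is an $\ell$-group, and $G/H \cong \chi(G_K) \subseteq \FF_\ell^\times$ is cyclic of order dividing $\ell-1$, so prime to $\ell$.

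The first step uses that any $\ell$-subgroup of $\GL_{2g}(\FF_\ell)$ fixes a full flag. Applied to $H$, acting on $V = A[\ell] \cong \FF_\ell^{2g}$, this means $H$ is conjugate into the upper unitriangular group. We then run an induction on $\dim V$ so that the flag becomes $G$-stable. The fixed subspace $V^H$ is nonzero, since a nontrivial $\ell$-group acting on a finite $\ell$-group has nontrivial fixed points. Because $H \trianglelefteq G$, the subspace $V^H$ is $G$-stable, and $G$ acts on it through the cyclic group $G/H$, whose order is prime to $\ell$. A cyclic group of order dividing $\ell-1$ acts semisimply on $V^H$ by Maschke's theorem. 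Its generator has minimal polynomial dividing $x^{\ell-1}-1$, which splits over $\FF_\ell$, so the action is diagonalizable over $\FF_\ell$. This produces a $G$-stable line $L \subseteq V$. We then pass to $V/L$: the image of $H$ there is still an $\ell$-group normal in the image of $G$ with cyclic prime-to-$\ell$ quotient, so induction gives a $G$-stable full flag. Consequently $\rho_{A,\ell}$ is conjugate to an upper triangular representation.

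The second step identifies the diagonal characters. Each diagonal entry $\psi_r \colon G_K \to \FF_\ell^\times$ is trivial on $G_{K(\bmu_\ell)}$. Indeed, $\psi_r(G_{K(\bmu_\ell)})$ is an image of an $\ell$-group, hence has $\ell$-power order, while it lies in $\FF_\ell^\times$, which has order prime to $\ell$. So $\psi_r$ factors through $\Gal(K(\bmu_\ell)/K) \cong \chi(G_K)$. This group is cyclic, and $\chi$ is a faithful character of it. Every character of a cyclic group $C \subseteq \FF_\ell^\times$ into $\FF_\ell^\times$ is a power of the inclusion, so $\psi_r = \chi^{i_r}$ with $i_r$ unique modulo $\sharp\chi(G_K)$. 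We choose the representative $0 \le i_r < \sharp\chi(G_K)$.

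I expect the main obstacle to be the diagonalizability in the inductive step, namely producing an $\FF_\ell$-rational $G$-stable line rather than one defined only over an extension field. This is handled by the fact that $G/H$ has exponent dividing $\ell-1$, so its eigenvalues already lie in $\FF_\ell$. The other points are routine bookkeeping: normality of $H$ in $G$, and checking that the induction hypotheses pass to quotients.
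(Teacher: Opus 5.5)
Your argument is correct. The paper gives no proof of its own and only cites \cite[Lemma 3]{primepowertorsion}; your proof is the standard argument behind that citation. The image of $G_{K(\bmu_\ell)}$ is a normal $\ell$-subgroup, so it acts unipotently, and the semisimplification of $\rho_{A,\ell}$ then factors through the cyclic group $\Gal(K(\bmu_\ell)/K)$ of exponent dividing $\ell-1$, hence splits into powers of $\chi$; your inductive flag construction just makes this explicit.

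One harmless slip: $G_{K(\bmu_\ell)}=\ker(\chi|_{G_K})$ holds by definition, and $\det\rho_{A,\ell}=\chi^g$ only yields $\ker\rho_{A,\ell}\subseteq\ker(\chi^g)$, which for $g>1$ need not give $\bmu_\ell\subseteq K(A[\ell])$. You never need that containment, since it suffices that $G/H$ is a quotient of $\chi(G_K)$.
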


\begin{remark}
    In the dimension 1 case we may use the Weil pairing to deduce that the determinant of $\rho_{A,\ell}$ must be the $\ell$-adic cyclotomic character, $\chi$ \cite[\S III.8]{Silverman}. We may then deduce an additional arithmetic constraint that:
    \begin{equation}\label{eq: sum i-val}
        i_1 +i_2 \equiv 1 \Mod{\ell -1}.
    \end{equation}
\end{remark}

\begin{remark}\label{rmk: pick maxl CM}
    Given two abelian varieties $A/K$ and $A'/K$ which are isogenous over $K$, then there is an equality of the torsion fields $K(A[\ell^\infty]) = K(A'[\ell^\infty])$. This, combined with the fact that two isogenous curves defined over the same field must have identical reduction information, implies that the property of being ``heavenly at $\ell$'' must be an isogeny invariant.

    Additionally, for any abelian variety $A/K$ with complex multiplication by an order in the field $L$ there must exist some $A'/K$ isogenous to $A$ with complex multiplication by the maximal order $\OO_L \subset L$ \cite[Lem. 2.4]{Lom-HeavenlyCM}. Hence when deciding whether $A/K$ with complex multiplication is heavenly, we may always choose to work with the representative $A'/K$ instead.
\end{remark}

In this paper we are concerned with bi-directional finiteness results where we will count examples of heavenly elliptic curves where both the heavenly prime $\ell$ and the field of definition $K$ are allowed to vary, given only constraints on the degree of the field $K$. We call these results bi-directional in reference to Figure \ref{fig:H31_map}, where vertical finiteness are results that bound $\Qbar$ classes of varieties at a fixed prime $\ell$, while horizontal finiteness results bound the primes $\ell$ at which heavenly varieties may exist over a fixed field.

\subsection{Balanced Abelian Varieties}

For any heavenly abelian variety $A/K$, and any prime $\mf{l}$ of $K$ above $\ell$, there exists a set of indices $j_{\mf{l},r}$ for $1 \leq r \leq 2g$ with the following property from \cite{RT-constrainedtorsion}:

\begin{equation}\label{eq: ei =j}
    ei_r \equiv j_{\mf{l},r} \Mod{\ell -1}
\end{equation}

The set $\{j_{\mf{l},r}\}_{r=1}^{2g}$ are called the \textit{Tate-Oort numbers} for $A$ which are determined by the structure of the $\ell$-torsion as a group scheme. The Tate-Oort numbers have the property that $\sum_{r = 1}^{2g}j_{\mf{l},r} = eg$. We call $A$ \textit{balanced} at $\mf{l}$ if $j_{\mf{l},r} = \frac{e}{2}$ for all $1 \leq r \leq 2g$. If $A$ is balanced at every prime $\mf{l}$ above $\ell$, then we may simply say $A$ is \textit{balanced} at $\ell$. A more thorough explanation of the theory behind this definition can be found in \cite{quadfields}. We will utilize the extra constraints on balanced curves to show such curves have a non-surjective trace in \S \ref{sec:odd-degree}. Before we begin that analysis, we give a uniform bound on $\ell$ that forces a heavenly elliptic curve over a cubic field to be balanced.

Let $\mathcal{B}(K,g)$ denote the set of pairs $([A]_K,\ell) \in \mathcal{H}(K,g)$ where $A$ is balanced at $\ell$. There exists a uniform bound, $B(n,g)$ depending only on the degree of $K/\QQ$ and the dimension $g$ such that $\ell > B(K,g)$ and $([A]_K,\ell) \in \mathcal{H}(K,g) \text{ implies } ([A]_K,\ell) \in \mathcal{B}(K,g)$ \cite[Cor. 5.3]{quadfields}.

\begin{proposition}\label{prop: B(3,1)}
    $B(3,1) \leq 73$.
\end{proposition}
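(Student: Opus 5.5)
The plan follows the strategy behind \cite[Cor. 5.3]{quadfields}: compare the Frobenius eigenvalues at a small auxiliary prime, computed in two ways, once through Proposition~\ref{prop: heavenly upper triangular} and once through the Tate--Oort relation \eqref{eq: ei =j}. This yields an explicit nonzero integer that $\ell$ must divide whenever $E$ fails to be balanced, and a finite computation then bounds its prime factors.

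\textbf{Setup.} Let $([E]_K,\ell)\in\mathcal{H}(K,1)$ with $[K:\QQ]=3$ and $\ell>73$. Suppose $E$ is not balanced at some $\mf{l}\mid\ell$. Put $e=e_E(\mf{l})=e_{\mf{l}}\cdot e(E/K;\mf{l})$. Since $e_{\mf{l}}\in\{1,2,3\}$ and, by Proposition~\ref{prop: semistable options}, $e(E/K;\mf{l})\in\{1,2,3,4,6\}$, there are only finitely many possible values of $e$, all at most $18$. The Tate--Oort numbers satisfy $j_1+j_2=e$ with $0\le j_r\le e$, and non-balancedness means $j_1\neq e/2$.

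\textbf{The congruence.} Take a prime $\pp$ of $K$ above $q\in\{2,3\}$, with $\textbf{N}\pp=q^f$ and $f\in\{1,2,3\}$. Since $q\neq\ell$, Lemma~\ref{lem:heavenly equiv def} gives good reduction at $\pp$. Let $\alpha,\beta$ be the complex Frobenius eigenvalues, so that $\alpha+\beta=a_\pp$, $\alpha\beta=\textbf{N}\pp$ and $|\alpha|=|\beta|=\sqrt{\textbf{N}\pp}$.
\begin{itemize}
\item By Proposition~\ref{prop: heavenly upper triangular}, modulo (a prime above) $\ell$ the reductions of $\alpha,\beta$ are $\chi(\theta_\pp)^{i_1}=\textbf{N}\pp^{\,i_1}$ and $\textbf{N}\pp^{\,i_2}$.
\item Raising to the $e$-th power and applying \eqref{eq: ei =j}, which works modulo $\ell-1$ and hence is compatible with exponents of elements of $\FF_\ell^\times$, gives
\[
\alpha^e+\beta^e\equiv \textbf{N}\pp^{\,j_1}+\textbf{N}\pp^{\,e-j_1}\Mod{\ell}.
\]
\item The quantity $P_e(a_\pp,\textbf{N}\pp)\defeq\alpha^e+\beta^e$ is an integer polynomial in $a_\pp$ and $\textbf{N}\pp$ (a Dickson/Lucas-type recursion).
\item The sum is symmetric in $\alpha,\beta$, so the ordering ambiguity between $i_1$ and $i_2$ is harmless.
\end{itemize}
Set $D=P_e(a_\pp,\textbf{N}\pp)-\textbf{N}\pp^{\,j_1}-\textbf{N}\pp^{\,e-j_1}$. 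Then $|\alpha^e+\beta^e|\le 2\textbf{N}\pp^{\,e/2}$, while strict AM--GM with $j_1\ne e/2$ gives $\textbf{N}\pp^{\,j_1}+\textbf{N}\pp^{\,e-j_1}>2\textbf{N}\pp^{\,e/2}$. Hence $D\neq 0$ and $\ell\mid D$.

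\textbf{Finite search.} The data $(e,j_1,q,f,a_\pp)$ range over a finite set, with $a_\pp$ restricted by the Hasse bound $|a_\pp|\le 2\sqrt{\textbf{N}\pp}$. In principle one could therefore compute the largest prime factor of all such $D$ in \texttt{SageMath}.

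\textbf{Main obstacle.} A single auxiliary prime will not give a bound as small as $73$, since $|D|$ can be as large as roughly $8^{18}$. I would therefore sharpen the argument as follows.
\begin{itemize}
\item Use $\pp_2\mid 2$ and $\pp_3\mid 3$ simultaneously. For each $e$ and $j_1$, $\ell$ must divide $\gcd(D_{\pp_2},D_{\pp_3})$ for some admissible pair $(a_{\pp_2},a_{\pp_3})$.
\item Add further constraints:
\begin{itemize}
\item if $\ell$ is unramified in $K$, then $e_{\mf{l}}=1$;
\item $f$ must be consistent with the splitting types possible in a cubic field;
\item one may use $e(E/K;\mf{l})$ for whichever prime $\mf{l}$ gives the sharpest constraint.
\end{itemize}
\item If $\pp_3$ is still insufficient, adjoin a prime above $5$ or $7$.
\end{itemize}
I would then run the enumeration over all tuples, discard tuples where the gcd is $0$ (which cannot occur, since each $D\ne 0$), and record the largest prime dividing some admissible gcd. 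The expected output is that every such prime is at most $73$, which would prove $B(3,1)\le 73$. The risk is that some $(e,j_1)$ pair with large $e$, such as $e=18$ or $e=12$, leaves a large common prime factor, which would force more auxiliary primes to be added.
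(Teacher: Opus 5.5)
Your congruence $\ell\mid D$ is exactly the first screen in the paper's proof, which also uses $\tau_e$ computed from $\tau_1$ and screens at $p=3$ and then other $p\le 11$. Your argument that $D\neq 0$ is fine. The gap is that this screen alone cannot reach $73$, and none of your proposed sharpenings repairs it. Passing from $\bar\alpha=q^{i_1}$ to $\bar\alpha^{e}=q^{j_1}$ discards information. At each auxiliary prime any $e$-th root of $q^{j_1}$ in $\overline{\FF}_\ell$ is allowed, and each prime gets its own independent $a_\pp$.

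Concretely, take $(j_1,j_2)=(0,18)$ and $\ell=109$. Here $\ell$ is totally ramified and $e(E/K;\mf{l})=6$, so your ramification refinements do not exclude this case. In $\FF_{109}$, $\omega=63$ is a primitive cube root of unity and $16$ has order $9$. Modulo $109$:
\begin{itemize}
\item $\omega^2$ is a root of $T^2-4T+8$ and of $T^2-2T+27$;
\item $\omega$ is a root of $T^2-20T+125$;
\item $16$ is a root of $T^2+4T+7$.
\end{itemize}
All these traces satisfy the Hasse bound, and in each case the eigenvalue has $18$-th power $1=q^{j_1}$. Hence $109\mid D$ at $p=2,3,5,7$. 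For $p\ge 11$, the inert case $q=p^3$ has $\lfloor 2\sqrt{q}\rfloor\ge 72$. The Hasse interval then meets every class mod $109$, and $a_\pp\equiv 1+q$ (eigenvalues $1$ and $q$) works. Since you cannot exclude inert auxiliary primes, no enlargement of the set of auxiliary primes removes $109$. Your gcd search would therefore output primes above $73$.

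The missing ingredient is the paper's second stage. It keeps the first bullet of your argument instead of raising it to the $e$-th power. For each surviving $\ell$, solve $ei_1\equiv j_1$ and $i_1+i_2\equiv 1 \Mod{\ell-1}$. Then require that one single pair $(i_1,i_2)$ give $q^{i_1}+q^{i_2}$ a representative in $[-\lfloor 2\sqrt{q}\rfloor,\lfloor 2\sqrt{q}\rfloor]$ for every auxiliary prime, over all possible $f$. This forces the eigenvalue into $\langle q\rangle\subseteq\FF_\ell^\times$. It also couples the different auxiliary primes through the common exponent.

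In the example, $2$ has order $36$ modulo $109$ and $i_1\in 6\ZZ$. The candidate traces are then:
\begin{itemize}
\item $\pm 3,\pm 44,\pm 62$ for $q=2$;
\item $5, 25, -30$ for $q=4$;
\item $\pm 9$ for $q=8$.
\end{itemize}
All of these lie outside the corresponding Weil ranges, so $109$ is eliminated already at $p=2$. Only after this second screen do the survivors shrink to Table~\ref{tab:survivingcase}, whose largest prime is $73$.
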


\begin{proof}
    This upper bound was found by a computational search using \Sage \cite{sagemath} following a strategy described in \cite{quadfields}. The code used is publicly available at \cite{OHara26}.

In order to conduct this search, we first suppose we have a fixed pair of Tate-Oort numbers $(j_1,j_2)$ with $j_1 < j_2$. We then determine a list of primes $\ell$ for which it would be possible to have an elliptic curve, $E/K$ over a cubic field which is heavenly at $\ell$ with Tate-Oort numbers $(j_1,j_2)$. Note that the $e$ is now determined, as $j_1 + j_2 = e$.

Without loss of generality, we may assume $\ell > 11$. Fix some prime $p \neq \ell$, and for each prime of $K$, $\mathfrak{p}$ above $p$, set $f = f_\mf{p}$. Note then that $1 \leq f \leq 3$. We consider a choice of Frobenious element $\theta_\mathfrak{p}$. Let $q = p^f$. For a heavenly elliptic curve at $\ell$, the trace of our representation $\tau_1 = \alpha_1 + \alpha_2$ must have the following restraint from the Weil bound: $\vert \tau_1\vert \leq 2\sqrt{q}$.

Using the value of $\tau_1$, we determine the trace of $m$-th power of the chosen Frobenious element. For an abelian variety $A$, the action of $\theta_\mf{p}^m \curvearrowright V_\ell A$ has characteristic polynomial $P_m(T) \in \ZZ[T]$. When factoring over $\mathbb{C}$, $P_m(T) = (T-\alpha_1^m)(T-\alpha_2^m)$, where $-\tau_m$ is the coefficient of $T$ in $P_m(T)$. See \cite[\S 4.2]{quadfields} The values of $\alpha_1$ and $\alpha_2$ are determined by the trace of $\tau_1$ above. Hence, the trace of some $\theta_\mf{p}^m$ must be given by $\tau_m = \alpha_1^m + \alpha_2^m$. However, the $e$-th power gives us an extra congruence to consider. As $\tau_1$ must determine $\tau_e$, we still only consider about $4\sqrt{q}$ possible values for $\tau_e$ even as $e$ grows.
By \eqref{eq: ei =j}:
\begin{equation}\label{eq: te - q-q}
    \tau_e - q^{j_1} - q^{j_2} \equiv 0 \Mod{\ell}.
\end{equation}
In other words, for each $p$, if $\ell \neq p$, then $\ell\vert \left(\tau_e - p^{f\cdot j_1}-p^{f\cdot j_2}\right)$. There are only finitely many possible values for $\tau_e$ and $f$, and so only finitely many $\ell$ which satisfy \eqref{eq: te - q-q}. First screening against $p = 3$, we obtain a finite list of $\ell$ which can possibly admit a heavenly elliptic curve with Tate-Oort numbers $(j_1,j_2)$. We now repeat this screen for other $p \leq 11$, removing any $\ell$ which fails \eqref{eq: te - q-q} for every choice of $\tau_e$ and $f$.

For the next step of the computation, we consider possible options for the powers of $\chi$ that appear on the diagonal of the representation given in Proposition \ref{prop: heavenly upper triangular}. We now fix a prime $\ell$ from our reduced list of possible primes for our fixed Tate-Oort pair $(j_1, j_2)$. Recall that 
$$ ei_1 \equiv j_1 \Mod{\ell-1}\: \eqref{eq: ei =j} \qquad \text{and} \qquad i_2 +i_1 \equiv 1 \Mod{\ell-1} \: \eqref{eq: sum i-val}.$$

Once we solve for the values of $(i_1,i_2)$, it is possible to compute the trace of our representation $\rho_{E,\ell}$ with respect to a Frobenius element $\theta_\mathfrak{p}$. The pairs $(i_1, i_2)$ are determined only by the representation, so they must provide a valid trace for a Frobenius element at any prime $p$.

We consider each $(i_1, i_2)$ pair that meets these congruences. If $\mf{p}$ is a prime of $K$ of norm $q = p^f$ then each Forbenius element $\theta_\mf{p}$ must have $\chi^{i_1} (\theta_\mf{p})+ \chi^{i_2}(\theta_\mf{p}) \equiv \tau_1 \Mod{\ell}$. In particular, for each pair of values $(i_1,i_2)$, we check if there exists an integer representative of $\pm \left(p^{f\cdot i_1} + p^{f\cdot i_2}\right) \Mod{\ell}$ in the range $ \left[-\lfloor2\sqrt{p^f}\rfloor, \lfloor2\sqrt{p^f}\rfloor\right]$. If this condition fails for even a single $\mf{p}$, then $(i_1,i_2)$ cannot be the powers of $\chi$ in the representation. In some cases, every possible $(i_1,i_2)$ pair can be eliminated, and so a curve heavenly at $\ell$ with Tate-Oort numbers $(j_1,j_2)$ cannot exist.

%Finally, we can remove some remaining cases by appealing to \cite[Prop. 6.2, Cor. 6.4]{RT-constrainedtorsion}. This gives us the extra conditions that if our curve $E/K$ is heavenly at $\ell$ with $e = 8$ then $\ell \not\equiv 1 \Mod{4}$, if $e = 9$ then $\ell \not\equiv 1 \Mod{3}$, if $e = 12$ then $\ell \not\equiv 1 \Mod{4}$ or $\ell \not\equiv 1 \Mod{6}$, and finally if $e = 18$ then $\ell \not\equiv 1 \Mod{6}$. We may only used the larger $e$ values for this reduction as if it is possible that $e_\mathfrak{l} = 1$, we cannot guarantee that $f_\mathfrak{l} = 1$, and the lemma may not apply.

All the remaining possible cases are summarized in Table \ref{tab:survivingcase}.

\end{proof}

\section{Curves over Odd Degree Fields}\label{sec:odd-degree}

    \begin{proposition}\label{prop: balanced at rational}
    Let $E/K$ be an elliptic curve and suppose $K/\QQ$ is an extension of odd degree. Suppose $E$ is balanced at the odd rational prime $\ell$. Then $\ell \equiv 3 \Mod{4}$.
\end{proposition}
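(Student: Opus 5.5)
The plan is to work at a single well-chosen prime $\mf{l}$ of $K$ above $\ell$ and compare $2$-adic valuations in the congruence \eqref{eq: ei =j}. Throughout, $E$ is heavenly at $\ell$ (balancedness is only defined in that setting), so Proposition \ref{prop: heavenly upper triangular} and \eqref{eq: ei =j} are available. The case $\ell = 3$ is trivial since $3 \equiv 3 \Mod{4}$, so I assume $\ell > 3$; then Proposition \ref{prop: semistable options} applies.

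\textbf{Step 1: choose $\mf{l}$.} Since $\sum_{\mf{l} \mid \ell} e_{\mf{l}} f_{\mf{l}} = [K:\QQ]$ is odd, some prime $\mf{l}$ above $\ell$ has $e_{\mf{l}} f_{\mf{l}}$ odd, and in particular $e_{\mf{l}}$ odd. I fix this $\mf{l}$. Then $e = e_{\mf{l}} \cdot e(E/K;\mf{l})$ has $v_2(e) = v_2(e(E/K;\mf{l}))$. By Proposition \ref{prop: semistable options}, $e(E/K;\mf{l}) \in \{1,2,3,4,6\}$, so $v_2(e) \le 2$.

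\textbf{Step 2: use balancedness.} Balancedness at $\mf{l}$ gives $j_{\mf{l},1} = e/2$, so $e$ is even; write $e = 2m$. By Step 1, $v_2(m) \le 1$. The congruence \eqref{eq: ei =j} for $r = 1$ reads $2m\, i_1 \equiv m \Mod{\ell-1}$, that is,
\[
m(2i_1 - 1) \equiv 0 \Mod{\ell - 1}.
\]

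\textbf{Step 3: compare $2$-adic valuations.} Since $2i_1 - 1$ is odd, $v_2\bigl(m(2i_1-1)\bigr) = v_2(m) \le 1$. Divisibility by $\ell - 1$ then forces $v_2(\ell-1) \le 1$. As $\ell$ is odd, $v_2(\ell-1) = 1$, which is exactly $\ell \equiv 3 \Mod{4}$.

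\textbf{Main obstacle.} The delicate point is Step 1, and in particular that the oddness of $[K:\QQ]$ is really needed. At a prime with $e_{\mf{l}} = 2$ (possible in degree $3$ when $\ell = \mf{l}_1^2\mf{l}_2$), one could have $e = 8$ or $e = 12$, so $v_2(e) = 3$, and the argument would only give $\ell \not\equiv 1 \Mod{8}$. Hence it is essential to pick a prime with odd ramification index. I would also check that \eqref{eq: ei =j} holds modulo $\ell - 1$ as stated, rather than only modulo $\sharp\chi(G_K)$. If only the weaker congruence were available, I would instead pass to exponents modulo $\sharp\chi(G_K)$, which equals $\ell - 1$ at primes where $K$ and $\QQ(\bmu_\ell)$ are suitably disjoint, and handle the remaining cases by the same valuation count.
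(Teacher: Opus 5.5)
Your proof is correct and follows the paper's argument: you choose $\mf{l}\mid\ell$ with $e_{\mf{l}}$ odd, so that $v_2(e)\le 2$ by Proposition \ref{prop: semistable options}, and play this against the balanced condition. The only difference is that the paper cites \cite[Lem.~4.1]{quadfields} for $4\mid e$ and $v_2(e)>v_2(\ell-1)$, whereas you derive the needed inequality directly from $m(2i_1-1)\equiv 0 \Mod{\ell-1}$; this makes your argument self-contained, and the paper already states \eqref{eq: ei =j} modulo $\ell-1$, so your fallback caveat is not needed.
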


\begin{proof}
    If $\ell = 3$, the result immediately follows. So suppose $\ell > 3$ and $[K:\QQ] = n$. 
    As $E$ is balanced at $\ell$, $E$ is balanced at all primes $\mf{l}_i\vert\ell$. Suppose $\ell$ splits into $r$ primes. Then  $\sum_{i=1}^{r}f_{\mf{l}_i}e_{\mf{l}_i} = n$.

    It cannot be the case that every $e_{\mf{l}_i}$ is even since $n$ is odd.
    So up to re-ordering, we may assume that $e_{\mf{l}_1}$ is an odd integer.

    As $E$ is balanced at $\mf{l}_1$, $4\vert e_E(\mf{l}_1)$ and $\ord_2(e_E(\mf{l}_1)) > \ord_2(\ell-1)$ by \cite[Lem. 4.1]{quadfields}. By Proposition \ref{prop: semistable options}, $e_E(\mf{l}_1) \in \{e_{\mf{l}_1}, 2e_{\mf{l}_1}, 3e_{\mf{l}_1}, 4e_{\mf{l}_1}, 6e_{\mf{l}_1}\}$. 
    Hence $e_E(\mf{l}_1) = 4e_{\mf{l}_1}$ is the only possible solution.
    Thus $\ord_2(e_E(\mf{l}_1)) = 2 > \ord_2(\ell-1) = 1$. This is only possible if $\ell \equiv 3 \Mod{4}$.

\end{proof}

The main observation in this proof is that there must always be an odd ramification index for an odd degree extension. This directly implies a corollary in the case where the extension $K/\QQ$ is Galois, since any prime $\mf{l}$ above $\ell$ will have odd ramification index.

\begin{corollary}\label{cor: balanced Galois}
    Let $E/K$ be an elliptic curve and suppose $K/\QQ$ is a Galois extension of odd degree.  Suppose $E$ is balanced at some prime $\mf{l}$ above $\ell$, an odd prime. Then $\ell \equiv 3\Mod{4}$.
\end{corollary}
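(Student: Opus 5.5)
The plan is to rerun the local argument from the proof of Proposition \ref{prop: balanced at rational}, but only at the single prime $\mf{l}$ where $E$ is balanced. In that proof the hypothesis ``balanced at every prime above $\ell$'' was used only to guarantee balance at some prime with odd ramification index. When $K/\QQ$ is Galois, every prime above $\ell$ has odd ramification index, so the one given prime $\mf{l}$ already has this property.

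First, if $\ell = 3$ there is nothing to prove, so assume $\ell > 3$. Since $K/\QQ$ is Galois, $\Gal(K/\QQ)$ acts transitively on the primes above $\ell$. Hence all of them share a common ramification index $e_\mf{l}$ and a common inertial degree $f$. If $r$ denotes the number of such primes, then $r e_\mf{l} f = n = [K:\QQ]$. Because $n$ is odd, $e_{\mf{l}}$ must be odd.

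Next, apply \cite[Lem. 4.1]{quadfields} at $\mf{l}$: balance there gives $4 \mid e_E(\mf{l})$ and $\ord_2(e_E(\mf{l})) > \ord_2(\ell-1)$. By definition $e_E(\mf{l}) = e_{\mf{l}} \cdot e(E/K;\mf{l})$, and Proposition \ref{prop: semistable options} gives $e(E/K;\mf{l}) \in \{1,2,3,4,6\}$. Since $e_\mf{l}$ is odd, $\ord_2(e_E(\mf{l})) = \ord_2(e(E/K;\mf{l})) \le 2$, with equality only when $e(E/K;\mf{l}) = 4$. Combined with $4 \mid e_E(\mf{l})$, this forces $e(E/K;\mf{l}) = 4$ and $\ord_2(e_E(\mf{l})) = 2$. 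The strict inequality then becomes $\ord_2(\ell - 1) < 2$. As $\ell$ is odd, $\ord_2(\ell-1) = 1$, which is exactly $\ell \equiv 3 \Mod{4}$.

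I expect no real obstacle. The only point needing care is that \cite[Lem. 4.1]{quadfields} is a purely local statement at the single prime $\mf{l}$, so it applies without balance at the other primes above $\ell$. The Galois hypothesis is used only to make $e_\mf{l}$ odd for the given $\mf{l}$, rather than for some unspecified prime above $\ell$.
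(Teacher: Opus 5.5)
Your proof is correct and takes essentially the same route as the paper. The paper obtains the corollary by rerunning the local argument of Proposition \ref{prop: balanced at rational} at the given prime $\mf{l}$, noting that in a Galois extension of odd degree every prime above $\ell$ has odd ramification index. Your argument via $r e_{\mf{l}} f = n$ and the $2$-adic valuation check on $e(E/K;\mf{l}) \in \{1,2,3,4,6\}$ simply writes out details the paper leaves implicit.
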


Since $E/K$ is balanced at $\mf{l}$, we know the Tate-Oort numbers at $\mf{l}$ are $j_1 = j_2 = \frac{e}{2}$. So \eqref{eq: ei =j} implies

\begin{equation}\label{eq: balanced 2ei = e}
    2ei_r \equiv e \Mod{\ell -1}, \qquad r = 1,2.
\end{equation}

This equation has $\mathrm{gcd}(2e,\ell-1)$ solutions. For any odd prime $\ell$, $2\vert (\ell-1)$ and hence $2\vert\mathrm{gcd}(2e,\ell-1)$. This means way may always partition the solutions to \eqref{eq: balanced 2ei = e} into pairs $\{i_1, i_2\} \Mod{\ell - 1}$. 

In the quadratic case there is always unique set of solutions $\{i_1, i_2\} \Mod{\ell-1}$, however that may not be the case when $[K:\QQ] > 2$. The next lemma shows that we may form the partition of solutions such that \eqref{eq: sum i-val} holds for each pair.

\begin{lemma}\label{lem: i1,i2 soln} Let $E/K$ be an elliptic curve heavenly at an odd prime $\ell$ and suppose $K/\QQ$ is an extension of odd degree. Further, suppose either that $E$ is balanced at $\ell$, or $K/\QQ$ is Galois and $E$ is balanced at some prime $\mf{l}$ above prime $\ell$. The solutions to \eqref{eq: balanced 2ei = e} are:

\begin{enumerate}
    \item[(a)] If $\ell = 3$, then $\{i_1,i_2\} \equiv \{0,1\} \Mod{2}$.

    \item[(b)] If $\ell > 3$, let $s \in \ZZ$ such that $2es \equiv 0 \Mod{\ell-1}$. Then $\{i_1,i_2\} \equiv \{\frac{\ell+1}{4} + s, \frac{3\ell-1}{4} - s\} \Mod{\ell -1}$.
\end{enumerate}
    
\end{lemma}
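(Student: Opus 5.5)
The plan is to reduce everything to the structure of $e = e_E(\mf{l})$ established in the proof of Proposition \ref{prop: balanced at rational}, then solve the linear congruence \eqref{eq: balanced 2ei = e} explicitly, and finally use \eqref{eq: sum i-val} to pair the solutions.

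\textbf{Case $\ell = 3$.} Here $\ell - 1 = 2$, so \eqref{eq: balanced 2ei = e} imposes no condition modulo $2$. The only constraint is \eqref{eq: sum i-val}, namely $i_1 + i_2 \equiv 1 \Mod{2}$. This forces $\{i_1,i_2\} \equiv \{0,1\} \Mod{2}$, which is (a).

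\textbf{Case $\ell > 3$: choosing the prime.} First I choose the prime $\mf{l}$ at which to read off \eqref{eq: ei =j}.
\begin{enumerate}
\item[(i)] If $E$ is balanced at $\ell$, then since $n = \sum_i e_{\mf{l}_i} f_{\mf{l}_i}$ is odd, some $\mf{l}_1 \mid \ell$ has odd $e_{\mf{l}_1}$, and I use that prime.
\item[(ii)] If $K/\QQ$ is Galois and $E$ is balanced at $\mf{l}$, then every ramification index divides $n$ and so is odd; I use $\mf{l}$ itself.
\end{enumerate}
This choice is legitimate because the exponents $i_r$ come from Proposition \ref{prop: heavenly upper triangular} and are attached to $\rho_{E,\ell}$, not to $\mf{l}$. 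So any single prime at which $E$ is balanced yields valid constraints. At the chosen prime, the argument of Proposition \ref{prop: balanced at rational} (via \cite[Lem. 4.1]{quadfields} and Proposition \ref{prop: semistable options}) gives $e = 4m$ with $m = e_{\mf{l}}$ odd, and $\ell \equiv 3 \Mod{4}$ (Corollary \ref{cor: balanced Galois} in case (ii)). In particular $\frac{\ell+1}{4}$ and $\frac{3\ell-1}{4}$ are integers.

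\textbf{Case $\ell > 3$: solving the congruence.} Next I exhibit a particular solution $i_0 = \frac{\ell+1}{4}$. Indeed,
\[
2e i_0 = e\cdot\tfrac{\ell+1}{2} = e + e\cdot\tfrac{\ell-1}{2} = e + 2m(\ell-1) \equiv e \Mod{\ell-1}.
\]
Since \eqref{eq: balanced 2ei = e} is a linear congruence, its full solution set is $\{i_0 + s : 2es \equiv 0 \Mod{\ell-1}\}$. Thus $i_1 = \frac{\ell+1}{4} + s$ for some such $s$. Then \eqref{eq: sum i-val} gives
\[
i_2 \equiv 1 - i_1 = \tfrac{3-\ell}{4} - s \equiv \tfrac{3\ell-1}{4} - s \Mod{\ell-1},
\]
where the last step adds $\ell - 1$. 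As a consistency check, $2e i_2 \equiv 2e - 2e i_1 \equiv 2e - e = e$, so $i_2$ is also a solution of \eqref{eq: balanced 2ei = e}. This yields (b).

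\textbf{Main obstacle.} The only delicate point is justifying $e = 4 e_{\mf{l}}$ with $e_{\mf{l}}$ odd at the chosen prime. In case (i) this requires noting that the particular prime singled out in the proof of Proposition \ref{prop: balanced at rational} may be used, since the $i_r$ are global. The remaining steps are routine arithmetic.
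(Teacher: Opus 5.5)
Your proof is correct and follows the paper's argument: you exhibit $\frac{\ell+1}{4}$ as a particular solution of \eqref{eq: balanced 2ei = e}, shift it by any $s$ with $2es \equiv 0 \Mod{\ell-1}$, and pair the result using $i_1+i_2\equiv 1 \Mod{\ell-1}$. For $\ell=3$ the congruence is vacuous because $e$ is even; you should say so explicitly, and the paper gets this from $4\mid e$. You are slightly more complete than the paper, which only checks that the shifted pairs are solutions, whereas you also show every solution has this form. Your special choice of a prime with $e = 4e_{\mf{l}}$ is unnecessary: once $\ell\equiv 3 \Mod{4}$ is known, the check $2e\cdot\frac{\ell+1}{4}\equiv e$ only uses $2\mid e$, which holds at any prime where $E$ is balanced.
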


 \begin{proof}
    The solution for $(a)$ is clear, since $4\vert e$ \cite[Lem. 4.1]{quadfields}. In this case \eqref{eq: balanced 2ei = e} reduces to $0i_r \equiv 0 \Mod{\ell-1}$.

    For part $(b)$, it is easy to check that $\{(\ell+1)/4, (3\ell-1)/4\} \Mod{\ell -1}$ gives a valid set of solutions. Here $s$ can be thought of as a shift of this initial solution. Shifting in this way maintains the constraint from \eqref{eq: sum i-val}, which in the elliptic curve case simply says $i_1 + i_2 \equiv 1 \Mod{\ell -1}$.

    Let $s \in \ZZ$ such that $2es \equiv 0 \Mod{\ell -1}$. We will check that $((\ell+1)/4) +s$ provides a solution to \eqref{eq: balanced 2ei = e}. The case with $((3\ell-1)/4) -s$ works similarly.

    $$2e\left(\frac{\ell+1}{4} +s\right) \equiv 2e\left(\frac{\ell+1}{4}\right) + 2es
        \equiv e + 2es \equiv e \Mod{\ell-1} $$
\end{proof}

\begin{remark}
    Suppose we have $2n$ solutions to \eqref{eq: balanced 2ei = e}, where $n$ is an odd divisor of $[K:\QQ]$. This indicates there are $n$ pairs of solutions, one of which must be $((\ell+1)/4, (3\ell-1)/4)$. In which case we have $n-1$ shifted solutions, and hence $(n-1)/2$ unique shift values $s \Mod{\ell -1}$ by the previous proposition.

   % This also indicates what particular values $s$ can take. By assumption $\mathrm{gcd}(2e,\ell-1) = 2k$ with $k > 1$. In order for $2es \equiv 0 \Mod{\ell-1}$, it must be that $2es\vert (\ell-1)$ and hence $\left(\frac{\ell-1}{k}\right)\vert s$. 
   %Thes remark became redundant with the new lemma obove, but i wanted to keep the divisibility note here.
\end{remark}

We call a, necessarily heavenly, elliptic curve \textit{totally balanced} if it is balanced at some prime $\mf{l}\vert\ell$ and $\{i_1,i_2\} \equiv \{(\ell+1)/4, (3\ell -1)/4\} \Mod{\ell-1}$.

\begin{remark}
    For a given balanced curve $E/K$, if $\mathrm{gcd}(2e,\ell-1) = 2$, then $\{(\ell+1)/4, (3\ell -1)/4\}$ is the unique set of solutions $\Mod{\ell -1}$ and $E$ must be totally balanced. First, $\mathrm{gcd}(2e,\ell-1) = 2$ clearly holds if $e_\mf{l} = 1$. Next suppose $p$ is some prime dividing $e_\mf{l}$. Then $p\vert (\ell -1) \iff \ell \equiv 1 \Mod{p}$. So if it is the case that $\ell \not\equiv 1 \Mod{p}$ for all $p\vert [K:\QQ]$ then $\mathrm{gcd}(2e,\ell-1) = 2$.
\end{remark}

\begin{proposition}\label{prop: new trace}
Suppose $K/\QQ$ is an odd degree extension, and $E/K$ is heavenly at an odd prime $\ell$. Suppose either that $E$ is balanced at $\ell$, or $K/\QQ$ is Galois and $E$ is balanced at some prime $\mf{l}$ above $\ell$. Further suppose $\mf{p}$ is a prime of $K$ above $p \neq \ell$, with residue degree $f \defeq f_\mf{p}$ and norm $q = \textbf{N}\mf{p}$. Then 

\begin{equation}
    a_\mf{p}(E) \equiv q^{(\ell +1)/4}\left(q^s + q^{-s}\left(\frac{p}{\ell}\right)^f\right) \Mod{\ell}.
\end{equation}

In particular when E is totally balanced, we have

\begin{equation}\label{eq: totally balanced}
    a_\mf{p}(E) \equiv q^{(\ell +1)/4}\left(1 +\left(\frac{p}{\ell}\right)^f\right) \Mod{\ell}.
\end{equation}

\end{proposition}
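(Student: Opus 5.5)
We combine the upper triangular form of Proposition \ref{prop: heavenly upper triangular} with the explicit exponents found in Lemma \ref{lem: i1,i2 soln}. By Proposition \ref{prop: heavenly upper triangular}, $\rho_{E,\ell}$ is conjugate to an upper triangular representation with diagonal characters $\chi^{i_1},\chi^{i_2}$. Hence for any Frobenius element $\theta_\mf{p}$ at a prime $\mf{p}\nmid \ell$ of good reduction,
\[
a_\mf{p}(E) \equiv \trace\rho_{E,\ell}(\theta_\mf{p}) \equiv \chi(\theta_\mf{p})^{i_1}+\chi(\theta_\mf{p})^{i_2} \Mod{\ell}.
\]
Good reduction away from $\ell$ is guaranteed by Lemma \ref{lem:heavenly equiv def}. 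Since $\theta_\mf{p}$ acts on $\bmu_\ell$ by $\zeta\mapsto \zeta^{\textbf{N}\mf{p}}$, we have $\chi(\theta_\mf{p}) \equiv q \Mod{\ell}$. So the whole computation reduces to evaluating $q^{i_1}+q^{i_2}$ modulo $\ell$.

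The hypotheses (odd degree, and either balanced at $\ell$ or Galois and balanced at one $\mf{l}$) are exactly those of Lemma \ref{lem: i1,i2 soln}. We apply it for $\ell>3$; the case $\ell=3$ is handled separately below. The lemma gives
\[
\{i_1,i_2\} \equiv \left\{\tfrac{\ell+1}{4}+s,\ \tfrac{3\ell-1}{4}-s\right\} \Mod{\ell-1}.
\]
Here $(\ell+1)/4$ is an integer by Proposition \ref{prop: balanced at rational} or Corollary \ref{cor: balanced Galois}, since $\ell\equiv 3 \Mod 4$. Because $q$ is a unit mod $\ell$, exponents may be read modulo $\ell-1$ by Fermat. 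Writing $\frac{3\ell-1}{4} = \frac{\ell+1}{4} + \frac{\ell-1}{2}$, we get
\[
q^{i_1}+q^{i_2} \equiv q^{(\ell+1)/4}\left(q^{s} + q^{-s}\, q^{(\ell-1)/2}\right) \Mod{\ell}.
\]
Euler's criterion gives $q^{(\ell-1)/2} \equiv \left(\frac{q}{\ell}\right) = \left(\frac{p}{\ell}\right)^f$ by multiplicativity of the Legendre symbol. This yields the first formula. Setting $s=0$, which is the totally balanced case by definition, gives \eqref{eq: totally balanced}.

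There is little real obstacle here; the main care points are bookkeeping. First, the factor $q^{-s}$ makes sense because $q$ is invertible mod $\ell$, as $p\neq\ell$. Second, the order of $i_1,i_2$ does not matter because the trace is symmetric. Third, bad-reduction primes do not arise, since $\mf{p}\nmid\ell$ already forces good reduction for heavenly curves.

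Finally, for $\ell=3$ we use part (a) of the lemma, which gives $\{i_1,i_2\}\equiv\{0,1\} \Mod 2$, so the trace is $1+q$. The formula must match this with $(\ell+1)/4=1$, where $s$ is only defined mod $2$. Taking $s\equiv 0$ gives $q\,(1+\left(\frac{q}{3}\right))$, and $\left(\frac{q}{3}\right)\equiv q \Mod 3$, so this equals $q+q^2\equiv q+1 \Mod 3$. Thus the $\ell=3$ case is consistent with the stated formula.
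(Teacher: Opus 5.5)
Your proposal is correct and follows the paper's proof essentially step for step: you use $\chi(\theta_\mf{p})\equiv q$, substitute the exponents from Lemma \ref{lem: i1,i2 soln}, factor out $q^{(\ell+1)/4}$ via $\frac{3\ell-1}{4}=\frac{\ell+1}{4}+\frac{\ell-1}{2}$, apply Euler's criterion, and set $s=0$ for the totally balanced case. Your separate check that the formula is consistent with part (a) of the lemma when $\ell=3$ is a small addition that the paper leaves implicit.
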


\begin{proof}
Let $\theta_\mf{p} \in G_K$ be a Frobenius element for $\mf{p}$. Since $\chi(\theta_\mf{p}) = q$, 

\begin{align*}
    a_\mf{p}(E) \equiv \trace(\rho_{E,\ell}(\theta_\mf{p})) & \equiv q^{i_1} + q^{i_2}\\
    & \equiv q^{((\ell +1)/4) +s} + q^{((3\ell-1)/4) - s}\\
    & \equiv q^{(\ell + 1)/4}\left(q^s + q^{((\ell-1)/2) -s} \right)\\
    & \equiv q^{(\ell+1)/4}\left(q^s + q^{-s}\left(\frac{p}{\ell}\right)^f\right) \Mod{\ell}.
\end{align*}

We deduce the equation for totally balanced curves by simply setting $s = 0$.
\end{proof}

\begin{proposition} \label{Prop: balanced trace KL}
    Let $K/\QQ$ be a Galois extension of odd degree and $E/K$ be an elliptic curve totally balanced at $\mf{l}$, a prime of $K$ above an odd prime $\ell$. Let $\mf{p}$ be a prime of $K$ above $p$, a prime distinct from $\ell$. Let $f: = f_{\mf{p}}$, and $L = \QQ(\sqrt{-\ell})$.
    \begin{enumerate}
        \item[(a)] If $\mf{p}$ is inert in $KL$, $a_\mf{p}(E) \equiv 0 \Mod{\ell}$.

        \item[(b)] If $\mf{p}$ splits in $KL$, $a_\mf{p}(E)^2 \equiv 4p^f \Mod{\ell}$.

    \end{enumerate}
\end{proposition}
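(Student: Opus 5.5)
The plan is to start from the totally balanced trace formula \eqref{eq: totally balanced} in Proposition \ref{prop: new trace} and translate the Legendre-symbol factor into a splitting condition in the compositum $KL$. Since $E$ is totally balanced, Proposition \ref{prop: new trace} applies with $s=0$ and gives
\[
a_\mf{p}(E) \equiv q^{(\ell+1)/4}\left(1 + \left(\tfrac{p}{\ell}\right)^f\right) \Mod{\ell}, \qquad q = p^f .
\]
Its hypotheses hold: $K/\QQ$ is Galois of odd degree, $E$ is balanced at $\mf{l}$, and $E$ is heavenly by the definition of totally balanced. Also, by Corollary \ref{cor: balanced Galois} we have $\ell \equiv 3 \Mod{4}$, which is exactly what makes $(\ell+1)/4$ an integer. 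Consequently $-\ell \equiv 1 \Mod 4$, so $L=\QQ(\sqrt{-\ell})$ has discriminant $-\ell$. For $p$ odd, quadratic reciprocity with $\ell\equiv 3\Mod 4$ gives $\left(\frac{-\ell}{p}\right)=\left(\frac{p}{\ell}\right)$. For $p=2$, the prime $2$ splits in $L$ iff $-\ell\equiv 1\Mod 8$, i.e. $\ell\equiv 7 \Mod 8$, i.e. iff $\left(\frac{2}{\ell}\right)=1$. Hence in all cases $p$ splits in $L$ iff $\left(\frac{p}{\ell}\right)=1$, and $p$ is inert in $L$ otherwise. Ramification cannot occur because $p\neq \ell$.

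The next step is to read off how $\mf{p}$ behaves in $KL/K$. Since $[K:\QQ]$ is odd and $[L:\QQ]=2$, we have $K\cap L=\QQ$, so $KL/K$ is quadratic and $\mf{p}$ is unramified in it. I will use Frobenius elements. Let $\theta_\mf{p}$ be a Frobenius for $\mf{p}$; it restricts on $L$ to $\mathrm{Frob}_p^f$. So $\mf{p}$ splits in $KL$ iff $\mathrm{Frob}_p|_L^f$ is trivial, i.e. iff $\left(\frac{p}{\ell}\right)^f=1$, and $\mf{p}$ is inert otherwise. Concretely, $\mf{p}$ is inert exactly when $p$ is inert in $L$ and $f$ is odd. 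Since $K/\QQ$ is Galois, $f$ divides $[K:\QQ]$ and is therefore automatically odd, though this is not even needed.

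The two cases then follow. If $\mf{p}$ is inert in $KL$, then $\left(\frac{p}{\ell}\right)^f=-1$, so the bracket vanishes and $a_\mf{p}(E)\equiv 0 \Mod{\ell}$. If $\mf{p}$ splits, the bracket equals $2$, and
\[
a_\mf{p}(E)^2 \equiv 4 q^{(\ell+1)/2} = 4 q\cdot q^{(\ell-1)/2} \equiv 4q\left(\tfrac{p}{\ell}\right)^f \equiv 4p^f \Mod{\ell},
\]
using Euler's criterion.

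The main point requiring care is the identification of the splitting of $\mf{p}$ in $KL$ with the sign of $\left(\frac{p}{\ell}\right)^f$. This needs reciprocity, including the $p=2$ case, together with the Frobenius restriction argument in the compositum; both are routine once $\ell\equiv 3\Mod 4$ is in hand.
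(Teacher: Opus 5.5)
Your proof is correct. Its skeleton matches the paper's: specialize Proposition~\ref{prop: new trace} to $s=0$, obtain $\ell\equiv 3\Mod{4}$ from Corollary~\ref{cor: balanced Galois}, use reciprocity to link the splitting of $p$ in $L$ to $\left(\frac{p}{\ell}\right)$, and then evaluate the bracket.

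The difference is in how you pass from the behavior of $\mf{p}$ in $KL$ to the sign $\left(\frac{p}{\ell}\right)^f$.

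The paper fixes $\mf{P}\mid\mf{p}$, sets $\mf{q}=\mf{P}\cap\OO_L$, and compares $f_{\mf{P}|\mf{p}}\cdot f=f_{\mf{P}|\mf{q}}\cdot f_{\mf{q}}$. Because $K/\QQ$ and $KL/L$ are Galois of odd degree, both $f$ and $f_{\mf{P}|\mf{q}}$ are odd. Parity then forces $f_{\mf{q}}=f_{\mf{P}|\mf{p}}$. Oddness of $f$ is used once more to replace $\left(\frac{p}{\ell}\right)^f$ by $\left(\frac{p}{\ell}\right)$.

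You instead use $\mathrm{Frob}_{\mf{P}|\mf{p}}|_L=\mathrm{Frob}_p^{\,f}$ together with the restriction isomorphism $\Gal(KL/K)\cong\Gal(L/\QQ)$. This gives ``$\mf{p}$ splits in $KL$ iff $\left(\frac{p}{\ell}\right)^f=1$'' in one step. Your step uses neither the oddness of $f$ nor that $K/\QQ$ is Galois, so it is slightly cleaner and more general. The paper's route stays within elementary residue-degree bookkeeping.

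You also treat $p=2$ explicitly. The paper's appeal to $\left(\frac{-\ell}{p}\right)=\left(\frac{p}{\ell}\right)$ passes over this case, where it implicitly needs the Kronecker symbol.
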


\begin{proof}
    Let $\mf{P}$ be a prime of $KL$ above $\mf{p}$, and $\mf{q} = \mf{P}\cap \OO_L$.
    By Corollary \ref{cor: balanced Galois} $\ell \equiv 3 \Mod{4}$, so we have $\left(\frac{-\ell}{p}\right) = \left(\frac{p}{\ell}\right)$. Thus $p$ splits in $L$, meaning $f_\mf{q} = 1$, if and only if $\left(\frac{p}{\ell}\right) = 1$.

    As both $K$ and $L$ are Galois over $\QQ$, the extensions $KL/\QQ$, $KL/L$, and $KL/K$ are all be Galois as well \cite[\S 14.4 Prop. 21]{dummit2003abstract}.

    For $(a)$, suppose $\mf{p}$ is inert in $KL$. Then $f_{\mf{P}|\mf{p}}$ = 2. Since $K/\QQ$ is a Galois extension of odd degree, it must be that $f$ is some odd divisor of the degree of the extension. We can now compare inertial degrees using our two distinct multiplicative towers of fields.

    \[f_\mf{P} = f_{\mf{P}|\mf{p}}\cdot f = 2f = f_{\mf{P}|\mf{q}}\cdot f_\mf{q}.\]

    As $KL/L$ is Galois of odd degree, $f_{\mf{P}|\mf{q}}$ is odd. Hence $f_\mf{q} = 2$ and $p$ is inert in $L$, so $\left(\frac{p}{\ell}\right) = (\frac{p}{\ell})^f = -1$.

    By Proposition \ref{prop: new trace}, $a_\mf{p}(E) \equiv 0  \Mod{\ell}.$

    For $(b)$, suppose $\mf{p}$ splits in $KL$. Then $f_{\mf{P}|\mf{p}} = 1$, $f_{\mf{P}}$ is odd, and $f_\mf{q} = 1$. Since $\ell \equiv 3 \Mod{4}$, $\ell$ is the only prime that ramifies in $L$. Thus $p$ splits in $L$, i.e. ,$(\frac{p}{\ell}) = 1$. Proposition \ref{prop: new trace} now implies 
    \[a_\mf{p}(E)^2 \equiv 4\left(p^f\right)^{(\ell + 1)/2} \equiv 4\left(p^{(\ell + 1)/2}\right)^f \equiv 4\left(p\left(\frac{p}{\ell}\right)\right)^f \equiv 4p^f \left(\frac{p}{\ell}\right)^f \equiv 4p^f \Mod{\ell}.\qedhere \]

    \end{proof}

\begin{proposition}\label{prop: heavenly not surj}
Let $K/\QQ$ be a Galois extension of odd degree and suppose $E/K$ is an elliptic curve totally balanced at a prime $\mf{l}$ above an odd prime $\ell$. Then 
\[\tr\left(\rho_{E,\ell}(G_K)\right) \subseteq \left(\frac{2}{\ell}\right)\cdot\FF_\ell^{\times2}\cup \{0\}.\]
    
\end{proposition}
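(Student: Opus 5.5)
The plan is to compute the trace directly on Frobenius elements, using the description of the trace set from \S\ref{sec:arithmetic}. By the Chebotarev argument recorded there,
\[\trace\left(\rho_{E,\ell}(G_K)\right) = \{a_\mf{p}(E) \Mod{\ell} : \mf{p} \nmid \ell\mf{N}\},\]
so it suffices to show that for every prime $\mf{p}$ of good reduction not above $\ell$, the residue $a_\mf{p}(E) \bmod \ell$ is either $0$ or lies in $\left(\frac{2}{\ell}\right)\FF_\ell^{\times 2}$. Fix such a $\mf{p}$, lying above $p \neq \ell$, with $f = f_\mf{p}$ and $q = p^f$. Since $E$ is totally balanced at $\mf{l}$ and $K/\QQ$ is Galois of odd degree, the hypotheses of Proposition \ref{prop: new trace} hold with $s=0$. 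Equation \eqref{eq: totally balanced} then gives
\[a_\mf{p}(E) \equiv q^{(\ell+1)/4}\left(1+\left(\frac{p}{\ell}\right)^f\right) \Mod{\ell}.\]
Corollary \ref{cor: balanced Galois} gives $\ell \equiv 3 \Mod 4$, so the exponent $(\ell+1)/4$ is an integer. I split into two cases according to the sign $\left(\frac{p}{\ell}\right)^f = \left(\frac{q}{\ell}\right)$.

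If $\left(\frac{q}{\ell}\right) = -1$, the bracket vanishes and $a_\mf{p}(E) \equiv 0$. This is the inert case of Proposition \ref{Prop: balanced trace KL}(a).

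If $\left(\frac{q}{\ell}\right) = 1$, then $a_\mf{p}(E) \equiv 2\, q^{(\ell+1)/4}$. Here $q$ is a unit mod $\ell$ because $p \neq \ell$. The key point is that $q^{(\ell+1)/4}$ is a nonzero square mod $\ell$: its Legendre symbol is $\left(\frac{q}{\ell}\right)^{(\ell+1)/4} = 1$. Hence $a_\mf{p}(E) \in 2\,\FF_\ell^{\times 2}$.

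It remains to identify $2\,\FF_\ell^{\times 2}$ with $\left(\frac{2}{\ell}\right)\FF_\ell^{\times 2}$. The group $\FF_\ell^\times/\FF_\ell^{\times 2}$ has order $2$. Because $\ell \equiv 3 \Mod 4$, the element $-1$ is a nonsquare and represents the nontrivial class. Therefore $2 \in \left(\frac{2}{\ell}\right)\FF_\ell^{\times 2}$, and so $2\,\FF_\ell^{\times 2} = \left(\frac{2}{\ell}\right)\FF_\ell^{\times 2}$. Combining the two cases yields the claimed containment.

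The only delicate step is the square-class bookkeeping: making sure that $q^{(\ell+1)/4}$ is a square precisely when the bracket is nonzero, and that $\pm 1$ represent the two square classes. Both facts rest on $\ell \equiv 3 \Mod 4$ from Corollary \ref{cor: balanced Galois}, so I would invoke that corollary explicitly at the start of the proof.
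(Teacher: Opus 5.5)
Your proof is correct and follows the paper's argument: you reduce to Frobenius traces via Chebotarev, apply \eqref{eq: totally balanced}, separate the vanishing case from the case $a_\mf{p}\equiv 2q^{(\ell+1)/4}$, and use $\ell\equiv 3 \Mod{4}$ to pin down the square class. The only difference is that you split on $\left(\frac{q}{\ell}\right)$ rather than on $\left(\frac{p}{\ell}\right)$, so you never need the paper's observation that $f$ is odd.
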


\begin{proof}
    In order to show the image of the trace is non-surjective, we first recall that $\ell \equiv 3\: (\text{mod } 4)$, and so $-1$ is not a square $(\text{mod } \ell)$. 
    Hence $\left(\frac{2}{\ell}\right)\cdot\FF_\ell^{\times2}$ contains exactly the square classes or non-square classes depending on the sign of $\left(\frac{2}{\ell}\right)$. By the Chebotarev Density Theorem and the finiteness of $\rho_{E,\ell}$ in every coset of $\ker(\rho_{E,\ell})$ is represented by a Frobenius element $\theta_\mf{p}$ for some $\mf{p}\nmid \ell$.
    So we must show that for all $a_\mf{p} \in \tr\left(\rho_{E,\ell}(G_K)\right)$ it is the case that either $a_\mf{p} \equiv 0 \Mod{\ell}$ or $\left(\frac{a_\mf{p}}{\ell}\right) = \left(\frac{2}{\ell}\right).$

    First set $f\defeq f_\mf{p}$. Suppose that $\left(\frac{p}{\ell}\right) = 1$. Then \eqref{eq: totally balanced} implies $a_\mf{p} \equiv 2\left(p^f\right)^{\frac{f(\ell + 1)}{4}}\: (\text{mod } \ell)$. 
    Now $\left(\frac{a_\mf{p}}{\ell}\right) = \left(\frac{2p^{\frac{f(\ell+1)}{4}}}{\ell} \right) = \left(\frac{2}{\ell}\right) \left(\frac{p}{\ell}\right)^\frac{f(\ell+1)}{4} = \left(\frac{2}{\ell}\right)$ as desired.

    Now suppose $\left(\frac{p}{\ell}\right) = -1$. Then $a_\mf{p} \equiv \left(p^f\right)^{\frac{\ell+1}{4}}(1 + (-1)^f)\: (\text{mod } \ell)$. 
    As $K/\QQ$ is a Galois extension of odd degree, $f$ must be odd. 
    Thus $a_\mf{p} \equiv 0\: (\text{mod } \ell)$.
\end{proof}

\begin{remark}
    In the quadratic case, \cite{quadfields}, the authors were able to show that $\tr\left(\rho_{E,\ell}(G_K)\right) = \left(\frac{2}{\ell}\right)\cdot\FF_\ell^{\times2}\cup \{0\}$. It is a surprising result that the reverse containment was not immediate over cubic fields. We will attempt to classify a partial result where we can show the trace of $\rho_{E,\ell}$ is \textit{precisely} the squares or non-squares inside $\FF_\ell^\times$.
\end{remark}

    For Lemma \ref{lemma: intersection square class}, we make the mild assumption that $KL\cap \QQ(\bmu_\ell) = L$, where $L = \QQ(\sqrt{-\ell})$. Here we classify two sufficient conditions for this to hold.

\begin{lemma}\label{lem: when KL cap Q = L}
    Let $K/\QQ$ be Galois of odd degree, $\ell$ an odd prime, and $L = \QQ(\sqrt{-\ell})$, $KL \cap \QQ(\bmu_\ell) = L$ if either of the following hold:
    \begin{enumerate}
        \item[(a)] $K$ is a real number field.
        \item[(b)] For $\mf{l}$ a prime of $K$ above $\ell$ $\left(e_\mf{l},(\ell-1)/2\right) = 1$.
    \end{enumerate}
\end{lemma}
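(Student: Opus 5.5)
Throughout I use $\ell \equiv 3 \Mod{4}$, as in Corollary~\ref{cor: balanced Galois}; this is needed for $L=\QQ(\sqrt{-\ell})$ to lie in $\QQ(\bmu_\ell)$ at all. Set $n=[K:\QQ]$ and $F \defeq KL \cap \QQ(\bmu_\ell)$, so that $L \subseteq F$. The plan is to pin down the odd-degree part of $F$ and show it is trivial.

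\textbf{Step 1: the odd part of $F$ sits inside $K$.} Since $n$ is odd and $[L:\QQ]=2$, we have $K\cap L=\QQ$. Hence $\Gal(KL/\QQ)\cong \Gal(K/\QQ)\times C_2$, and $[F:\QQ]=2d$ with $d$ odd. Because $F\subseteq\QQ(\bmu_\ell)$, it is cyclic over $\QQ$, so $F=L\cdot F'$ where $F'$ is its unique subfield of degree $d$. Now $F'\subseteq KL$ is Galois over $\QQ$ of odd degree. Its fixed group $H\le \Gal(K/\QQ)\times C_2$ has odd index, so $H$ contains the order-$2$ factor $\Gal(KL/K)$. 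Therefore $F'\subseteq K$, and in fact $F'\subseteq K\cap\QQ(\bmu_\ell)$. The lemma is thus reduced to showing $d=1$.

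\textbf{Step 2: case (b).} Every subfield of $\QQ(\bmu_\ell)$ is totally ramified at $\ell$, so $e_{\ell}(F'/\QQ)=d$. Since $F'\subseteq K$ with both fields Galois, multiplicativity of ramification indices in the tower $K/F'/\QQ$ gives $d \mid e_{\mf l}$. Also $d \mid \ell-1$ and $d$ is odd, so $d\mid (\ell-1)/2$. The hypothesis $(e_{\mf l},(\ell-1)/2)=1$ then forces $d=1$, so $F=L$.

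\textbf{Step 3: case (a), the expected obstacle.} For (a) I would try to use that $K$ is real. The difficulty is that $F'$ is automatically totally real: the complex conjugation in $\Gal(\QQ(\bmu_\ell)/\QQ)$ is the unique element of order $2$ in a cyclic group, so it lies in the odd-index subgroup fixing $F'$. Reality of $K$ therefore imposes no constraint on $F'$, and it is not clear how it could force $d=1$.

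In fact the statement of (a) appears to fail. Take $\ell=7$ and $K$ the real cubic subfield of $\QQ(\bmu_7)$. Then $K$ is real and Galois of degree $3$, yet $KL\cap \QQ(\bmu_7)=\QQ(\bmu_7)\neq L$. So for (a) I would check whether the author intends an extra hypothesis, for instance that $K\cap\QQ(\bmu_\ell)=\QQ$, which is exactly the condition $d=1$ isolated in Step 1. Under that hypothesis Step 1 alone finishes the proof, and I would prove (a) only in that corrected form.
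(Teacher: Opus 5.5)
Your treatment is correct. Part (b) is proved, and on part (a) you are more careful than the paper, whose entire proof of (a) is the sentence that it is ``immediate.'' That sentence is motivated by the fact that $\QQ(j(E))$ is real for CM curves. Your example $\ell=7$, $K=\QQ(\bmu_7)^+$, gives $KL=\QQ(\bmu_7)$ and shows that (a) is false as stated. Your observation that every odd-degree subfield of $\QQ(\bmu_\ell)$ is fixed by complex conjugation explains why reality of $K$ can never help. In the CM setting that motivated (a), the conclusion does hold, but for a different reason. There $KL=L(j(E))$ is the Hilbert class field of $L$, so $KL/L$ is unramified, whereas every nontrivial subextension of $\QQ(\bmu_\ell)/L$ is totally ramified at $\ell$. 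Consequently, real $K$ cannot be fed into Lemma \ref{lemma: intersection square class} on the strength of (a). You are also right to impose $\ell\equiv 3\Mod{4}$, which the paper uses tacitly. For $\ell\equiv 1\Mod{4}$ one has $L\not\subseteq\QQ(\bmu_\ell)$, so the conclusion fails for every $K$.

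For (b), both arguments are ramification counts, but they are organized differently.
\begin{itemize}
\item The paper works over $L$. Since $e_{\mf l}$ is odd and $\ell$ ramifies in $L$, $\ell$ has ramification index $2e_{\mf l}$ in $KL$, hence $e_{\mf l}$ in $KL/L$. Since $L'=KL\cap\QQ(\bmu_\ell)$ is totally ramified over $L$, $[L':L]$ divides both $e_{\mf l}$ and $(\ell-1)/2$.
\item You first use $\Gal(KL/\QQ)\cong\Gal(K/\QQ)\times C_2$ to push the odd-degree part of $F$ down into $K$. Together with the trivial reverse inclusion, this gives the identity $KL\cap\QQ(\bmu_\ell)=L\cdot(K\cap\QQ(\bmu_\ell))$. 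You then count ramification inside $K$.
\end{itemize}
The paper's route is slightly shorter. Yours yields a necessary and sufficient criterion, $K\cap\QQ(\bmu_\ell)=\QQ$. Condition (b) is a special case of it, and any proposed replacement for (a) can be tested against it.
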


\begin{proof}
    The proof for $(a)$ is immediate. We classify this result as if $E$ has complex multiplication then $K = \QQ(j(E))$ is a real number field.

    For $(b)$ as $K/\QQ$ is Galois of odd degree $e_\mf{l}$ is odd. As $\ell$ must ramify in $L$, we know the ramification index of $\ell$ in $KL$ is $2\cdot e_\mf{l}$. Let $L' := KL\cap\QQ(\bmu_\ell)$, we will calculate the degree of $[L':L]$.

    As $L'$ is a subfield of $KL$, we know the ramification index of a prime above $\ell$ in $L'/L$ must a divisor of $e_\mf{l}$. $L'$ must also be a subfield of $\QQ(\bmu_\ell)$ so $[L':L]\Big\vert \displaystyle\frac{\ell-1}{2}$. In addition, such an extension must be completely ramified at $\ell$, meaning the ramification index is the degree of the extension. Hence we can conclude $[L':L] \Big\vert e_\mf{l}$ as well. Therefore as $\left(e_\mf{l}, (\ell-1)/2\right) = 1$ by assumption, we must have that $L' = L$.
\end{proof}

\begin{remark}\label{rmk: KL cap Q condition totally balanced}
Note that $(e_\mf{l}, (\ell-1)/2) =1$ is equivalent to $(2e_\mf{l}, \ell-1) = 2$. So if $E/K$ is an elliptic curve which is balanced at $\mf{l}$ and condition $(b)$ holds, then $E/K$ is totally balanced at $\mf{l}$.
    
\end{remark}

\begin{lemma}\label{lemma: intersection square class}
     Let $K$ and $L$ as in Proposition \ref{Prop: balanced trace KL} and suppose $KL\cap \QQ(\bmu_\ell) = L$. For any fixed $b \in \FF_\ell^\times$, there exists a prime $p$ that splits completely in $KL$ such that $p \equiv b^2 $ (mod $\ell$). 
\end{lemma}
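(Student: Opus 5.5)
The approach is Chebotarev density in the compositum $M \defeq KL(\bmu_\ell) = K(\bmu_\ell)$; note $L = \QQ(\sqrt{-\ell}) \subseteq \QQ(\bmu_\ell)$ because $\ell \equiv 3 \Mod{4}$, so the two descriptions of $M$ agree. A rational prime $p \neq \ell$ with $p \equiv c \Mod{\ell}$ corresponds to $\mathrm{Frob}_p \in \Gal(\QQ(\bmu_\ell)/\QQ) \cong \FF_\ell^\times$ being the element $c$. The prime $p$ splits completely in $KL$ exactly when $\mathrm{Frob}_p$ is trivial on $KL$. So I want an element of $\Gal(M/\QQ)$ that is trivial on $KL$ and restricts to $b^2$ on $\QQ(\bmu_\ell)$.

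First I would set up the Galois theory. Both $KL$ and $\QQ(\bmu_\ell)$ are Galois over $\QQ$, with $KL$ Galois as in the proof of Proposition \ref{Prop: balanced trace KL}. By the standard description of the Galois group of a compositum of Galois extensions, restriction gives an isomorphism
\[\Gal(M/\QQ) \cong \{(\sigma,\tau) \in \Gal(KL/\QQ)\times\Gal(\QQ(\bmu_\ell)/\QQ) : \sigma|_{KL\cap\QQ(\bmu_\ell)} = \tau|_{KL\cap\QQ(\bmu_\ell)}\}.\]
Using the hypothesis $KL \cap \QQ(\bmu_\ell) = L$, the compatibility condition becomes $\sigma|_L = \tau|_L$. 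I then take $\sigma = \mathrm{id}$ and $\tau = b^2 \in \FF_\ell^\times \cong \Gal(\QQ(\bmu_\ell)/\QQ)$. The pair is compatible exactly when $\tau$ fixes $L$, since $\sigma|_L$ is trivial.

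The key check is that $\tau$ fixes $L$. The field $L = \QQ(\sqrt{-\ell})$ is the unique quadratic subfield of $\QQ(\bmu_\ell)$, so it is the fixed field of the index-two subgroup $\FF_\ell^{\times 2}$. Since $b^2$ lies in $\FF_\ell^{\times 2}$, the element $\tau$ fixes $L$, and the pair $(\mathrm{id}, b^2)$ gives an element $g \in \Gal(M/\QQ)$.

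Chebotarev density \cite[Thm. 13.4]{algebraic_number_neukrich}, applied to the conjugacy class of $g$ in $\Gal(M/\QQ)$, gives infinitely many primes $p$ unramified in $M$ with $\mathrm{Frob}_p$ in that class; I discard the finitely many ramified primes, including $\ell$. For such $p$, every conjugate of $g$ restricts trivially to $KL$, because $KL/\QQ$ is Galois and $g|_{KL} = \mathrm{id}$. So $p$ splits completely in $KL$. Because $\Gal(\QQ(\bmu_\ell)/\QQ)$ is abelian, the restriction to $\QQ(\bmu_\ell)$ is exactly $b^2$, which gives $p \equiv b^2 \Mod{\ell}$.

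The only step that uses the hypothesis is the compatibility on $KL \cap \QQ(\bmu_\ell)$. Without $KL\cap\QQ(\bmu_\ell) = L$, a larger intersection could force the residue class of $p$ into a proper subgroup of $\FF_\ell^{\times 2}$, and the argument would fail.
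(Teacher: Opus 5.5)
Your proof is correct and follows the same route as the paper. Both arguments use $KL\cap\QQ(\bmu_\ell)=L$ to realize $\Gal(KL(\bmu_\ell)/\QQ)$ as the fiber product of $\Gal(KL/\QQ)$ and $\Gal(\QQ(\bmu_\ell)/\QQ)$ over $\Gal(L/\QQ)$, and then apply Chebotarev to an element restricting to a square class on $\QQ(\bmu_\ell)$. Your version is more careful than the paper's: you explicitly take the $KL$-component to be the identity, which is what gives complete splitting, and you check that the Frobenius conjugacy class behaves correctly on both factors.
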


\begin{proof}
    We'll consider the following diagram of field extensions. 

    \begin{center}
\begin{tikzcd}
                      &                                                               & M \defeq KL(\bmu_\ell)                                                                        &                                   \\
                      & KL \arrow[ru, no head]                                                 &                                                                                           & \mathbb{Q}(\bmu_\ell) \arrow[lu, no head] \\
K \arrow[ru, no head] &                                                               & L = \mathbb{Q}(\sqrt{-\ell}) \arrow[lu, no head] \arrow[ru, "\frac{\ell-1}{2}"', no head] &                                   \\
                      & \mathbb{Q} \arrow[lu, "N", no head] \arrow[ru, "2"', no head] &                                                                                           &                                  
\end{tikzcd}
    \end{center}
    Let $b \in \FF_\ell^\times$ be given. Our goal is to find a Frobenius element in $\Gal(M/\QQ)$ corresponding to a rational prime that has the desired properties. For any number field, there are infinitely many primes that split completely, however our hope is to justify that there must always exist one that is congruent to $b^2$ for any fixed $b$. This will be done by first considering elements of $\Gal(KL/\QQ) \times \Gal(\QQ(\bmu_\ell)/\QQ)$. We then apply the Chebortarev Denisty Theorem to lift to a suitable element in $G_\QQ$. Note
     $$\Gal(M/\QQ) \cong \{(\sigma, \tau) : \sigma\vert_L = \tau\vert_L\} \leq \Gal(KL/\QQ) \times \Gal(\QQ(\bmu_\ell)/\QQ),$$ so we may fix a homomorphism $\phi:  \Gal(KL/\QQ) \times \Gal(\QQ(\bmu_\ell)/\QQ) \to \Gal(M/\QQ) $ \cite{dummit2003abstract}.

    First consider the criteria that $p$ slits completely in $KL$ and let $\theta_p \in G_\QQ$ be a Frobenius element for $p$. By the proof of Proposition \ref{Prop: balanced trace KL}, $\left(\frac{p}{\ell}\right) = 1$. Now consider $\chi$, the $\ell$-adic cyclotomic character modulo $\ell$. 
   
    This map has the property that $\chi(\theta_p) \equiv p $ (mod $\ell$). The second criteria we ask is that $p$ is a square $\Mod{\ell}$. For such a prime $p$ it must be that  $\chi(\theta_p) \in \Gal(\QQ(\bmu_\ell)/L)$. So we conclude that any Frobenius corresponding to a prime that splits completely in $KL$ and is congruent to a square modulo $\ell$ must fix the subfield $L$.

    Hence we can now reduce our search to the following subgroup
    \[H \defeq \Gal(KL/L) \times \Gal(\QQ(\bmu_\ell)/L).\]

    As $KL \cap \QQ(\bmu_\ell) = L$ by assumption, $\Gal(M/L) \cong H$. Hence the restriction of $\phi$ to $H$, $\phi_H : H \to \Gal(M/L)$, is an isomorphism.

    Hence for any $\tau \in \Gal(\QQ(\bmu_\ell)/L)$ representing a specific square modulo $\ell$, there exists $\sigma \in \Gal(KL/L)$ such that $\phi(\sigma, \tau ) \in \Gal(M/L) \leq \Gal(M/\QQ)$ as desired. \end{proof}

This lemma is is needed to show that in the special case where $KL \cap \QQ(\bmu_\ell) = L$, any member of $\displaystyle\left(\frac{2}{\ell}\right)\cdot\FF_\ell^{\times2} \cup \{0\}$ can be written as the trace of Frobenius for some prime $\mf{p}$ in $K$.

\begin{corollary}\label{Cor: reverse containment}
    With $K,L, E$ as in Proposition \ref{prop: heavenly not surj} and Lemma \ref{lemma: intersection square class}, then
    \[\tr\left(\rho_{E,\ell}(G_K)\right) = \left(\frac{2}{\ell}\right)\cdot\FF_\ell^{\times2}\cup \{0\}.\]
\end{corollary}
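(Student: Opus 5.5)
The inclusion $\subseteq$ is Proposition \ref{prop: heavenly not surj}, so only the reverse containment needs proof. The trace image equals $\{a_\mf{p}(E) \bmod \ell : \mf{p}\nmid \ell\mf{N}\}$, as recorded in the general notation section. It therefore suffices to show two things: $0$ is attained, and every element of $\left(\frac{2}{\ell}\right)\cdot\FF_\ell^{\times2}$ is attained as some $a_\mf{p}(E)$ with $\mf{p}$ a prime of good reduction not above $\ell$.

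\textbf{Step 1: the nonzero classes.} Fix $c \in \left(\frac{2}{\ell}\right)\cdot\FF_\ell^{\times 2}$. Since $\ell\equiv 3 \Mod 4$ by Corollary \ref{cor: balanced Galois}, the map $x\mapsto x^2$ on $\FF_\ell^{\times2}$ is a bijection, because $\FF_\ell^{\times 2}$ has odd order $(\ell-1)/2$. Equivalently, $x\mapsto x^{(\ell+1)/4}$ is a bijection on the squares, since it is a square root there. So write $c = 2u$ with $u\in\FF_\ell^{\times2}$, and choose a square $t$ with $t^{(\ell+1)/4}=u$.

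Apply Lemma \ref{lemma: intersection square class} to get a prime $p\equiv t \Mod{\ell}$ that splits completely in $KL$. Lemma \ref{lemma: intersection square class} and Chebotarev give infinitely many such $p$, so we may also take $p\nmid \ell\mf{N}$. Then any $\mf{p}\mid p$ of $K$ has $f=1$ and $\left(\frac p\ell\right)=1$, and \eqref{eq: totally balanced} gives
\[a_\mf{p}(E)\equiv 2p^{(\ell+1)/4}\equiv 2u = c \Mod{\ell}.\]
One point needs care. Lemma \ref{lemma: intersection square class} is stated as $p\equiv b^2$ for an arbitrary $b$, and this covers every square $t$. Its proof produces a Chebotarev class in $\Gal(M/\QQ)$, which has infinitely many primes, so avoiding the finitely many bad primes is harmless.

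\textbf{Step 2: the zero class.} Pick a rational prime $p\nmid \ell\mf{N}$ with $\left(\frac p\ell\right)=-1$ such that the primes of $K$ above $p$ have $f$ odd. The latter holds automatically, since $K/\QQ$ is Galois of odd degree. For instance, take $p$ inert in $L$; such primes exist by Chebotarev in $L$. Then \eqref{eq: totally balanced} gives $a_\mf{p}\equiv q^{(\ell+1)/4}(1+(-1)^f)\equiv 0$. Alternatively, cite Proposition \ref{Prop: balanced trace KL}(a) for a prime of $K$ inert in $KL$.

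The main obstacle is Step 1. It needs Lemma \ref{lemma: intersection square class} to hit \emph{every} square residue while forcing $f=1$. That in turn needs the hypothesis $KL\cap\QQ(\bmu_\ell)=L$, together with the observation that raising to the $(\ell+1)/4$ power permutes $\FF_\ell^{\times2}$. Both steps should be written out explicitly.
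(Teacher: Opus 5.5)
Your proposal is correct and follows the paper's proof. The zero class comes from a prime inert in $L$. Each nonzero class $c$ comes from a prime $p$ splitting completely in $KL$ with $p \equiv t \Mod{\ell}$, obtained from Lemma \ref{lemma: intersection square class}; your $t$ is exactly the paper's $(c/2)^2$. The only difference is that you read off $a_\mf{p} \equiv c$ directly from \eqref{eq: totally balanced}, whereas the paper uses $a_\mf{p}^2 \equiv 4p$ from Proposition \ref{Prop: balanced trace KL}(b) and then excludes $-c$ via the first containment and $\left(\frac{-1}{\ell}\right) = -1$.
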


\begin{proof}
    It is only necessary to show the reverse of the containment established in Proposition \ref{prop: heavenly not surj}. 

    Let $a \in \left(\frac{2}{\ell}\right)\cdot\FF_\ell^{\times2}\cup \{0\}$. We must demonstrate a prime $\mf{p}$ such that $a_\mf{p} \in \tr\left(\rho_{E,\ell}(G_K)\right)$ satisfies $a_\mf{p} \equiv a \Mod{\ell}$.

    If $a = 0$, pick any prime $p$ inert in $L$, and choose any prime $\mf{p}$ of K above $p$. Then $\left(\frac{p}{\ell}\right) = -1$, and so $a_\mf{p} \equiv 0$ as $f_\mf{p}$ must always be odd.

    Now suppose $a \neq 0$, and set $b = \frac{a}{2} \in \FF_\ell^\times$. By Lemma \ref{lemma: intersection square class}, there exists a prime $p$ that splits completely in $KL$ and has the property that $p \equiv b^2 \equiv \left(\frac{a}{2}\right)^2$.

  As $p$ splits completely in $KL$, it must also split completely in all sub-extensions. So for some prime $\mf{p}$ of $K$  above $p$, it must be that $\mf{p}$ splits in $KL$. By Proposition \ref{Prop: balanced trace KL}, $$a_\mf{p}(E)^2 \equiv 4p^f \equiv (2b)^2 \equiv a^2\: (\text{mod } \ell).$$ So it must be that $a_\mf{p}(E) \equiv \pm a \Mod{\ell}$. We have already shown $a_\mf{p} \in \left(\frac{2}{\ell}\right)\cdot\FF_\ell^{\times2}\cup \{0\}$ by the first containment, and $a \in \left(\frac{2}{\ell}\right)\cdot\FF_\ell^{\times2}\cup \{0\}$ by assumption.  So we must now simply show that $-a \not\in \left(\frac{2}{\ell}\right)\cdot\FF_\ell^{\times2}\cup \{0\}$. However $\ell \equiv 3 \Mod{4}$ and so $-1 \not\in \FF_\ell^{\times2}$. Thus $-a \notin \left(\frac{2}{\ell}\right)\FF_\ell^{\times 2}$ also.

\end{proof}

\section{Connections to Complex Multiplication}\label{sec:CM-heavenly}

    The trace behavior of a totally balanced elliptic curve over a Galois field very closely mirrors the behavior of an elliptic curve with complex multiplication. We will catalog some results simply about CM curves, and then determine exactly when a CM curve is also heavenly. We would like to note here that when we start by assuming our curve has CM, we do not need to assume the field of definition is Galois. The tools from class field theory allow us to restrict the behavior of primes without adding this assumption. Removing the Galois assumption means many of the results in this section are direct generalizations of work in \cite{quadfields}, however the usage of relative discriminants in Proposition \ref{prop: CM good red l 3} is new.

\begin{proposition}\label{prop: CM good red l 3}
    Suppose $E/K$ is an elliptic curve defined over $K = \QQ(j(E))$, an odd degree extension of $\QQ$.
    Suppose further that $E$ has complex multiplication by the maximal order of an imaginary quadratic field $L$. 
    If $E$ has good reduction away from an odd prime $\ell$, then $L = \QQ(\sqrt{-\ell})$ and $\ell \equiv 3 \Mod{4}$. 
\end{proposition}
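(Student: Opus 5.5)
The plan is to use the standard CM ramification argument: a CM curve over $K=\QQ(j(E))$ cannot have good reduction everywhere over $K$, because $K$ does not contain the CM field. The bad primes are then forced to lie over primes ramified in $L$.

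Let $D = \mathrm{disc}(L)$ and $H = KL$, the Hilbert class field of $L$. Since $E$ has CM by $\OO_L$, $K=\QQ(j(E))$ has degree $h_L$ (odd by hypothesis), and $H/L$ is unramified.

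\textbf{Step 1 (bad reduction at a prime over each ramified prime).} Let $p \mid D$ and let $\pp$ be a prime of $K$ above $p$. We claim $E$ has bad reduction at some such $\pp$. Suppose instead $E$ had good reduction at every prime of $K$ over $p$. The Gr\"ossencharacter $\psi_{E/K}$ attached to $E$ over $H$ (or the $\ell'$-adic representation for an auxiliary $\ell' \neq p$) would then be unramified over $p$. However, $K$ does not contain $L$, so the action of $G_K$ on $T_{\ell'}E$ has image in the normalizer of a Cartan subgroup and is not contained in the Cartan itself. The quadratic character $G_K \to \Gal(KL/K)$ cutting out $L$ is a quotient of this action. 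It is the restriction to $K$ of the character of $L/\QQ$.

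\textbf{Step 2 (this quadratic character is ramified at some prime over $p$).} The ramification index of $p$ in $K$ is odd: in the odd-degree, possibly non-Galois setting, the sum $\sum e f = h_L$ is odd, so at least one prime $\pp \mid p$ has $e_\pp$ odd. At such a $\pp$, the extension $KL/K$ is still ramified at $\pp$ because $p$ ramifies in $L$ with index $2$. This is where the relative discriminant $\mathfrak{d}_{KL/K}$ enters: I would show that $\mathfrak{d}_{KL/K}$ is divisible by $\pp$ whenever $e_\pp$ is odd, using $N_{K/\QQ}(\mathfrak{d}_{KL/K})\,\mathrm{disc}(K)^2 = \mathrm{disc}(KL) = \mathrm{disc}(L)^{h_L}\cdot N_{L/\QQ}(\mathfrak{d}_{KL/L})$ with $\mathfrak{d}_{KL/L}=1$, and comparing $p$-valuations. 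Since this quadratic character is a quotient of the Galois action on $E[\ell'^\infty]$, inertia at $\pp$ acts nontrivially, so by N\'eron--Ogg--Shafarevich $E$ has bad reduction at $\pp$.

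\textbf{Step 3 (conclusion).} Every prime dividing $D$ is therefore a prime of bad reduction, so every prime dividing $D$ equals $\ell$. Hence $|D|$ is a power of the odd prime $\ell$. Since $\ell$ is odd, $D \not\equiv 0 \pmod 4$, which forces $D=-\ell$, $L=\QQ(\sqrt{-\ell})$, and $-\ell\equiv 1 \pmod 4$, that is, $\ell\equiv 3 \pmod 4$. The one remaining case, $D=-4$ or $-8$ (no odd prime available), is excluded because Step 1 would force bad reduction at $2\neq \ell$.

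The main obstacle is Step 2. It requires checking carefully that the quadratic twist character appearing in the CM Galois action over non-Galois $K$ is genuinely ramified at some prime above $p$, which is what the discriminant valuation comparison is meant to settle.
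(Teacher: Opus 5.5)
Your argument is correct, but it takes a different route from the paper.

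The paper first invokes genus theory. The genus field $\Gamma(L)\subseteq KL$ has $2$-power degree over $L$, while $[KL:L]=[K:\QQ]$ is odd, so $\Gamma(L)=L$ and $\Delta_{L/\QQ}$ has a single prime factor $p$. It then uses $L\subseteq K(E[\ell])$ and N\'eron--Ogg--Shafarevich to see that $KL/K$ is unramified outside $\ell$. Finally, the global identity $\Delta_{K/\QQ}^2\,\ell^r=\pm(-p)^{[K:\QQ]}$ forces $p=\ell$.

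You bypass genus theory entirely and use the oddness of $[K:\QQ]$ locally. For each $p\mid D$ you pick $\pp\mid p$ with $e_\pp$ odd. Since $e(\mathfrak{P}|\pp)\,e_\pp=e(\mathfrak{P}|p)$ is even (as $p$ ramifies in $L\subseteq KL$), $KL/K$ ramifies at $\pp$. Then $KL\subseteq K(E[\ell'^\infty])$ together with N\'eron--Ogg--Shafarevich gives bad reduction at $\pp$.

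What each route buys:
\begin{itemize}
\item Yours proves the cleaner statement that every prime ramified in $L$ lies under a prime of bad reduction of $E$. It treats $p=2$ uniformly and explicitly rules out $D=-4,-8$.
\item The paper's step ``one prime factor, hence $\QQ(\sqrt{-p})$ with $p\equiv 3\pmod 4$'' does not address $D=-4,-8$, even though $\QQ(i)$ and $\QQ(\sqrt{-2})$ have class number $1$ and so fall under the hypothesis.
\item The paper's route instead isolates the reduction-independent fact that odd class number alone forces a single ramified prime.
\end{itemize}

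Two corrections to your write-up. First, what you flag as the main obstacle in Step 2 is already settled by the one-line multiplicativity of ramification indices. The discriminant comparison you propose is weaker, not stronger: via the parity of $v_p$ it only shows that some prime above $p$ divides $\mathfrak{d}_{KL/K}$, not that each $\pp$ with $e_\pp$ odd does. The parity count is also inconclusive when $v_p(D)$ is even, as for $D=-4$. Second, it is the $\ell'$-adic representation, via $\sigma[\alpha]\sigma^{-1}=[\alpha^\sigma]$, that carries Step 1. The Gr\"ossencharacter is not the right object over $K$, since $K\not\supseteq L$.
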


\begin{proof}
    Set $n = [K:\QQ]$, and consider the composite field $KL = L(j(E))$.
    This must be the Hilbert class field of $L$. \cite[Ch. II.4, Thm. 4.3]{advancedSilverman}
    Furthermore, since $n$ is be odd, we have $K \cap L = \QQ$ and hence $[KL:L] = [K:\QQ]$. We will denote the genus field of $L$ by $\Gamma(L)$, where $\Gamma(L) \subseteq KL$.
    
    By the classification of genus fields given in \cite[Thm. 6.1]{primesx+ny}, $\Gamma(L) = L(\sqrt{p_1^*}, \ldots , \sqrt{p_r^*})$ with each $p_i$ a factor of $\Delta_{L/\QQ}$ and $p_i^* = (\frac{p_i}{4})\cdot p_i$.
    Hence the degree of the extension $\Gamma(L)/L$ must be a power of 2.
    As $KL/L$ has odd degree, $L = \Gamma(L)$. 
    Hence $\Delta_{L/\QQ}$ must have only one prime factor, say $\Delta_{L/\QQ} = p$. 
    For this to be true for an imaginary quadratic field, $L$ must have the form $\QQ(\sqrt{-p})$ with $p \equiv 3\Mod{4}$.

    We now endeavor to show that $ p = \ell$.
    First, since $E/K$ has good reduction away from $\ell$, the extension $K(E[\ell])/K$ must be unramified away from $\ell$.
    By \cite[Lem. 3.15]{BCS17}, $L \subseteq K(E[\ell])$.
    So we have the following tower of field extensions on which we may compare discriminants.

    \begin{center}
        % https://tikzcd.yichuanshen.de/#N4Igdg9gJgpgziAXAbVABwnAlgFyxMJZARgBoAGAXVJADcBDAGwFcYkQBpACgFFkAdfjEaNKAShABfUuky58hFGWLU6TVuw4AZKTJAZseAkXKkATKoYs2iTrtmGFRM+cvqbIHdIfzjS0gDMbtbsggC29DgAFgBGMcAAipJSqjBQAObwRKAAZgBOEGFIACw0OBBIATRRMPRQ7JBgbN4g+YUlZRWILiA1dQ0EzXptRd2dSGS9tfW2jUO5BaOT5UimU-2zg-ati5XjiJN9M+BbkpSSQA
\begin{tikzcd}
                      & {K(E[\ell])}                                       &                       \\
                      & KL \arrow[u, no head]                              &                       \\
K \arrow[ru, no head] &                                                    & L \arrow[lu, no head] \\
                      & \mathbb{Q} \arrow[ru, no head] \arrow[lu, no head] &                      
\end{tikzcd}
    \end{center}

We may compute the discriminant $\Delta_{KL/\QQ}$ by comparing relative discriminants down either side of the tower. By \cite[Cor. 2.10]{algebraic_number_neukrich} we find a formula that relates absolute and relative field discriminants.  

\begin{equation}\label{eq: relative disc.}
    \Delta_{KL/\QQ} = \Delta_{K/\QQ}^{[KL:K]}\cdot N_{K/\QQ}(\Delta_{KL/K}) = \Delta_{L/\QQ}^{[KL:L]}\cdot N_{L/\QQ}(\Delta_{KL/L}).
\end{equation}

 Further we know that a a prime $\mf{p} \in K$ ramifies in $KL$ if and only if $\mf{p}\big\vert\Delta_{KL/K}$. As $KL/K$ is an extension unramified away from $\ell$, it must be that the only factors of the discriminant $\Delta_{KL/K}$ are primes $\mf{l}\big\vert \ell$. Hence, $N_{K/\QQ}(\Delta_{KL/K}) = \ell ^r$ for some $r$. Similarly, $KL/L$ is a totally unramified extension, so its discriminant $\Delta_{KL/L}$ must be a unit in $\OO_L$ and so $N_{L/\QQ}(\Delta_{KL/L}) = \pm 1$. Finally $\Delta_{L/\QQ} = -p$. Then \eqref{eq: relative disc.} simplifies to

 \[\Delta_{KL/\QQ} = (\Delta_{K/\QQ})^2\cdot \ell^r = \pm(-p)^n.\]

 It is clear that the right hand side of our equality has only one prime factor, and hence $p = \ell$.
\end{proof}

\begin{remark}
    Under the hypothesis of Proposition \ref{prop: CM good red l 3}, we have $KL/\QQ$, and hence $K/\QQ$ unramified away from $\ell$. This greatly restricts the fields that can appear as the odd-degree field of definition of an elliptic curve with complex multiplication, and good reduction away from one rational prime.
\end{remark}

\begin{remark}
      Note that $\sharp\textit{Cl}(\OO_L) = [KL:K]$ is a function of $n = [K:\QQ]$ only. There are finitely many imaginary quadratic fields with a fixed class number, so for each $n$ only finitely many choices of $\OO_L$ are possible. \cite{fixed-class-number}. This finite list of maximal orders may only produce examples of heavenly elliptic curves that meet the hypothesis of Proposition \ref{prop: CM good red l 3}. 
\end{remark}

The following two corollaries were also shown for $n = 2$ in \cite{quadfields}.

\begin{corollary}\label{cor: CM not everywhere good}
    An elliptic curve $E/K$ over a field $K = \QQ(j(E))$ of odd degree with complex multiplication by a maximal order $\OO_L$ cannot have everywhere good reduction.
\end{corollary}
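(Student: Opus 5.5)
The plan is to argue by contradiction, reusing the discriminant comparison from the proof of Proposition \ref{prop: CM good red l 3}. Suppose $E/K$ has everywhere good reduction, with $K=\QQ(j(E))$ of odd degree $n$ and CM by $\OO_L$. Everywhere good reduction means good reduction away from any prime, so in particular away from every odd prime $\ell$. Choose two distinct odd primes $\ell_1\neq\ell_2$; Proposition \ref{prop: CM good red l 3} applies to each of them.

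For $\ell_1$, the proposition gives $L=\QQ(\sqrt{-\ell_1})$. For $\ell_2$, it gives $L=\QQ(\sqrt{-\ell_2})$. These two quadratic fields are distinct, since their discriminants $-\ell_1$ and $-\ell_2$ differ, and this is a contradiction. The only thing to check is that the hypotheses of Proposition \ref{prop: CM good red l 3} really hold uniformly in $\ell$. They do: the assumptions are the CM by the maximal order, $K=\QQ(j(E))$ of odd degree, and good reduction away from $\ell$, and the last is implied by everywhere good reduction. The conclusion $L=\QQ(\sqrt{-\ell})$ depends on $\ell$ only through the good-reduction hypothesis.

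A more self-contained alternative avoids invoking the proposition twice. Everywhere good reduction gives that $K(E[m])/K$ is unramified away from $m$ for each $m$. By \cite[Lem.~3.15]{BCS17}, $L\subseteq K(E[\ell])$ for every $\ell$, so $KL/K$ is unramified away from $\ell$ for every $\ell$. Taking two values of $\ell$ shows $KL/K$ is unramified at all finite primes. Then in \eqref{eq: relative disc.} we get $N_{K/\QQ}(\Delta_{KL/K})=\pm1$, hence $\Delta_{K/\QQ}^2=\pm(-p)^n$ with $n$ odd. The left side is a square up to sign, while $p^n$ with $n$ odd is not, which gives a contradiction. The genus-field step showing $\Delta_{L/\QQ}=-p$ carries over verbatim.

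The main obstacle is making sure the cited Lemma 3.15 of \cite{BCS17} does not secretly need $\ell$ to have a particular shape; for instance, it might need $\ell$ to split or to avoid small primes. If it does, the remedy is to choose the two auxiliary primes $\ell_1,\ell_2$ large and satisfying whatever congruence condition is required. This is possible because the good-reduction hypothesis is available for every $\ell$, so the first argument goes through with any admissible pair.
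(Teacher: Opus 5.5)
Your main argument is correct and is essentially the paper's. The paper also uses the fact that everywhere good reduction lets you apply Proposition \ref{prop: CM good red l 3} at a freely chosen prime: it takes a single $\ell' \equiv 1 \Mod{4}$ and contradicts the conclusion $\ell' \equiv 3 \Mod{4}$, whereas you take two distinct primes and contradict the uniqueness of $L$. Your discriminant alternative is also sound, since it is exactly the $r=0$ case of that proposition's comparison, where $\Delta_{K/\QQ}^2=\pm(-p)^n$ with $n$ odd is impossible because the two sides have $\ord_p$ of different parity. Your concern about \cite[Lem.~3.15]{BCS17} does not matter, because the auxiliary primes can be chosen as large as needed.
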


\begin{proof}
    An elliptic curve with everywhere good reduction will have good reduction away from $\ell$ for every prime. Hence we must simply apply the previous theorem with some $\ell' \equiv 1 \Mod{4}$. We then conclude that $\ell' \equiv 3 \Mod{4}$, which is clearly a contradiction.
\end{proof}

\begin{corollary}\label{cor: CM not heavely twice}
    An elliptic curve $E/K$ over a field $K = \QQ(j(E))$ of odd degree with complex multiplication by a maximal order $\OO_L$ cannot be heavenly at two distinct primes.
\end{corollary}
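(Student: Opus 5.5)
Suppose, for contradiction, that $E/K$ is heavenly at two distinct primes $\ell_1 \neq \ell_2$. Here $K = \QQ(j(E))$ has odd degree and $E$ has CM by $\OO_L$. The argument will mirror Corollary \ref{cor: CM not everywhere good}, but with Proposition \ref{prop: CM good red l 3} applied twice and the two outputs compared. First we must place ourselves in that proposition's hypotheses at both primes. By Lemma \ref{lem:heavenly equiv def}(a), $E$ has good reduction outside the primes of $K$ above $\ell_1$, and likewise outside those above $\ell_2$.

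Next we check that both primes are odd. If one of them, say $\ell_2$, equals $2$, then the other $\ell_1$ is odd, and $E$ has good reduction away from $\ell_1$. Proposition \ref{prop: CM good red l 3} then gives $L = \QQ(\sqrt{-\ell_1})$ with $\ell_1 \equiv 3 \Mod{4}$. We still need to rule out $\ell_2 = 2$. Since $E$ has good reduction away from $\ell_1$ and away from $\ell_2$, and these sets of places are disjoint, $E$ has everywhere good reduction. This contradicts Corollary \ref{cor: CM not everywhere good}, which needs no parity assumption on $\ell$. So the case with $2$ is disposed of.

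In fact the everywhere-good-reduction observation settles every case at once. For distinct $\ell_1,\ell_2$, each prime $\mf{p}$ of $K$ lies above at most one of them. It therefore lies outside the bad set for at least one of the two conditions, so $E$ has good reduction at $\mf{p}$. Hence $E$ has everywhere good reduction, and Corollary \ref{cor: CM not everywhere good} yields the contradiction.

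As an alternative route, suppose both $\ell_i$ are odd and apply Proposition \ref{prop: CM good red l 3} to each. This gives $\QQ(\sqrt{-\ell_1}) = L = \QQ(\sqrt{-\ell_2})$, which forces $\ell_1 = \ell_2$, a contradiction. I would present the reduction to everywhere good reduction as the main proof, with this as a remark.

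The only point needing care is that heavenliness implies good reduction away from $\ell$ via Lemma \ref{lem:heavenly equiv def}. We should also confirm that Corollary \ref{cor: CM not everywhere good} applies in our setting. It does, since its proof only uses Proposition \ref{prop: CM good red l 3} with an auxiliary prime $\ell' \equiv 1 \Mod 4$, and there are infinitely many such primes by Dirichlet. I do not expect any genuine obstacle.
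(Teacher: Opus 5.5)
Your main argument is correct and is the same as the paper's. Heavenliness at $\ell_1 \neq \ell_2$ gives good reduction away from each prime (Lemma \ref{lem:heavenly equiv def}(a)), hence everywhere good reduction, which contradicts Corollary \ref{cor: CM not everywhere good}. This makes your separate treatment of $\ell = 2$ unnecessary. Your alternative via $\QQ(\sqrt{-\ell_1}) = L = \QQ(\sqrt{-\ell_2})$ is a valid remark when both primes are odd.
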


\begin{proof}
    If such a curve were to be heavenly at multiple primes, $\ell $ and $\ell'$, this implies that the curve must have good reduction away from $\ell$ and good reduction away from $\ell'$. Such a curve must then have good reduction everywhere, and we may apply Corollary \ref{cor: CM not everywhere good}.
\end{proof}

We now use the Hecke character to describe the trace of Frobenius for CM elliptic curves with good reduction away from a specific prime. We will assume the hypotheses of Theorem \ref{prop: CM good red l 3}. Let $M \defeq KL$ and let $E_M \defeq E\times_K M$. Let $\mf{P}$ be a prime of $M$ above $p$, and let $\mf{p} = \mf{P}\cap\OK$ and $\mf{q} = \mf{P}\cap\OO_L$ be the corresponding primes of $K$ and $L$ respectively.

We recall the definition of the Gr{\"o}\ss encharacter and some of its properties. This is a map $\psi_{E/M}: \mathbb{A}_M^\times \to \mathbb{C}^\times$ which carries information about the point counts for the reduction of $E$ at various primes. For a fractional ideal $\mf{P}$ of $M$, $\psi_{E/M}(\mf{P}) : = \psi_{E/M}(\ldots , 1, 1, \pi, 1, 1, \ldots)$ where $\pi$ is a uniformizer at $\mf{P}$, and appears in the $\mf{P}$ component. This map is well-defined independent of the choice of uniformizer exactly when $E$ has good reduction at $\mf{P}$ \cite[Ch. II.10]{advancedSilverman}.

In addition, $ \xi \defeq \psi_{E/M}(\mf{P}) \in L^\times$ generates the same principal fractional ideal in $L^\times$ as $N_{M/L}(\mf{P})$. Since $\mf{P}$ is an integral ideal, $\xi \in \OO_L$.\cite[Ch. II.9]{advancedSilverman}. So $\xi = (u  + v\sqrt{-\ell})/2$ for some $u,v\in \mathbb{Z}$.

%Had a citation from Advanced Silverman here that I removed. The fact that xi generates the same ideal as N_{M/L}(\mf{P}) (and the fact that the reduction of the map modulo p is the p-th frobenius) immediately imply these facts about the trace and norm.

The trace of $\xi$ in $L/\QQ$ gives $a_\mf{P}(E_M)$ and the norm of $\xi$ in $L/\QQ$ gives the size of the residue field $p^{f_\mf{P}}$. In particular,

\[a_{\mf{P}}(E_M) = \frac{u + v\sqrt{-\ell}}{2} + \frac{u - v\sqrt{-\ell}}{2} = u,\]

\[p^{f_\mf{P}} = N_{M/\QQ}\left(\mf{P}\right) = N_{L/\QQ}\left(\frac{u + v\sqrt{-\ell}}{2}\right) = \frac{u^2 + \ell v^2}{4}.\]

Consequently,

\begin{equation}\label{eq:1}
    4p^{f_\mf{P}} = a_{\mf{P}}(E_M)^2 + \ell v^2.
\end{equation}

\begin{proposition}
    Let $E/K$ be an elliptic curve over a field $K = \QQ(j(E))$ of odd degree. Let $L = \QQ(\sqrt{-\ell})$, and suppose $E$ has complex multiplication by $\OO_L$ and good reduction away from $\ell$. Let $\mf{p}$ be a prime of $K$ above $p$, a prime distinct from $\ell$, and denote the residue field degree $f := f_\mf{p}$. 
    \begin{enumerate}
        \item[(a)] If $\mf{p}$ is inert in $KL$, then $a_\mf{p}(E) = 0$.
        \item[(b)] If $\mf{p}$ splits in $KL$, then $a_p(E)^2\equiv 4p^f \Mod{\ell}$.
    \end{enumerate}
\end{proposition}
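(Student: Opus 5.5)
The plan is to compare Frobenius at $\mf{p}$ over $K$ with Frobenius at a prime $\mf{P}$ of $M = KL$ above $\mf{p}$. Then I would use the Gr\"o\ss encharacter identity \eqref{eq:1} over $M$, together with the standard relation between $a_\mf{p}(E)$ and $a_\mf{P}(E_M)$.

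Let $\alpha,\beta$ be the Frobenius eigenvalues of $E$ at $\mf{p}$, so that $a_\mf{p}(E)=\alpha+\beta$ and $\alpha\beta=p^f$. Base change to $M$ gives eigenvalues $\alpha^{g},\beta^{g}$ at $\mf{P}$, where $g=f_{\mf{P}\vert\mf{p}}$. Hence $a_\mf{P}(E_M)=\alpha^g+\beta^g$ and $p^{f_\mf{P}}=p^{fg}$.

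\textbf{Case (a).} Suppose $\mf{p}$ is inert in $KL$, so $g=2$. The standard route for CM curves is Deuring's criterion. $E$ has CM by $\OO_L$ defined over $M$ but not over $K$, since $L\not\subseteq K$ because $K$ has odd degree. Choose $\sigma\in G_K$ restricting to the nontrivial element of $\Gal(KL/K)$. Then $\sigma[\alpha]\sigma^{-1}=[\bar\alpha]$ for $\alpha\in\OO_L$. I would take $\sigma$ to be a Frobenius at $\mf{p}$; inertness means exactly that Frobenius restricts nontrivially to $KL$. The Tate module $V_{\lambda}E$ is a free rank-one $L\otimes\QQ_\lambda$-module, and $\theta_\mf{p}$ acts semilinearly with respect to the conjugation, which interchanges the two eigenlines of $\OO_L$. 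Therefore $\rho(\theta_\mf{p})$ is antidiagonal in that decomposition and has trace $0$. This gives $a_\mf{p}(E)=0$ as an integer.

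As a consistency check with the norm computation, one could also argue via the reduction: $\tilde E$ over $\FF_\mf{p}$ is supersingular because $\mf{p}$'s residue prime is inert in $L$. This holds because $f_{\mf{q}}=2$, by the same inertial-degree comparison as in the proof of Proposition~\ref{Prop: balanced trace KL}, using that $[KL:L]$ is odd. With $p\ge5$ one gets $\vert a\vert<2\sqrt{p^f}$ and $p\mid a$. Here $f$ is odd, so the supersingular trace $\pm\sqrt{p^f}$-type values are excluded, forcing $a=0$. Small $p$ needs a separate check, so I would rely mainly on the semilinearity argument.

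\textbf{Case (b).} Suppose $\mf{p}$ splits in $KL$, so $g=1$. Then $a_\mf{p}(E)=a_\mf{P}(E_M)$ and $f_\mf{P}=f$. Equation \eqref{eq:1} reads $4p^{f}=a_\mf{P}(E_M)^2+\ell v^2$, and reducing mod $\ell$ gives $a_\mf{p}(E)^2\equiv 4p^f \Mod{\ell}$.

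The main obstacle is case (a): establishing rigorously the semilinear action of a Frobenius not fixing $L$. I would cite \cite[Ch.~II]{advancedSilverman} for the fact that $\sigma\circ[\alpha]=[\alpha^\sigma]\circ\sigma$.
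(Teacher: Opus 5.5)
Your proof is correct and follows the paper. Part (b) is exactly the paper's argument: equal residue fields give $a_\mf{p}(E)=a_\mf{P}(E_M)$, and then you reduce \eqref{eq:1} modulo $\ell$. For part (a) the paper only cites Exercise 2.30 of \cite{advancedSilverman}, and your semilinearity argument is a self-contained proof of exactly that fact.

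The cleanest way to write (a) is on $V_\lambda E$ with $\lambda\neq p$, so that $\trace\rho(\theta_\mf{p})=a_\mf{p}(E)$. The relation $\rho(\theta_\mf{p})\,[\sqrt{-\ell}]\,\rho(\theta_\mf{p})^{-1}=[-\sqrt{-\ell}]$ then gives $\trace\rho(\theta_\mf{p})=-\trace\rho(\theta_\mf{p})=0$. This needs no eigenline decomposition and no splitting of $\lambda$ in $L$, and the supersingularity side-check can be dropped.
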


\begin{proof}
    The proof of part (a) is Exercise 2.30 in \cite{advancedSilverman}. A longer description is given in \cite{OHara-thesis}.

    For (b), suppose $\mf{p}$ splits in $M$. Then it must be that the residue fields $\OO_M/\mf{P}$ and $\OK/\mf{p}$ coincide. Since $f = f_\mf{P}$ clearly the reduction of $E$ at $\mf{p}$ and $E_M$ at $\mf{P}$ have the same number of points as $\OK/\mf{p} \cong \OO_M/\mf{P}$. Thus $a_\mf{p}(E) = a_\mf{P}(E_M)$. By \eqref{eq:1}, $4p^f = a_\mf{p}(E)^2 + \ell v^2$ and hence $4p^f \equiv a_\mf{p}(E)^2 \Mod{\ell}$.
\end{proof}

\begin{theorem}\label{thm: CMheavenly iff not sur}
    Let $\ell$ be an odd prime, and $K$ an extension of $\QQ$ of odd degree. Suppose $E/K$ is an elliptic curve that satisfies $K = \QQ(j(E))$. Further, assume that $E/K$ has complex multiplication and has good reduction away from $\ell$.  Then $E/K$ is heavenly at $\ell$ if and only if $\trace(\rho_{E,\ell}(G_K))$ is a proper subset of $\FF_\ell$.
    
\end{theorem}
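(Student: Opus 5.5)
By Remark \ref{rmk: pick maxl CM}, both being heavenly and the trace set are invariant under $K$-isogeny, so we may assume $E$ has CM by the maximal order $\OO_L$. Proposition \ref{prop: CM good red l 3} then gives $L=\QQ(\sqrt{-\ell})$ and $\ell\equiv 3\Mod 4$. The plan is to prove the two implications separately, using the trace description of the preceding proposition for the forward direction and Lemma \ref{lem:heavenly equiv def} for the reverse.

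\emph{Heavenly implies non-surjective trace.} Every coset of $\ker(\rho_{E,\ell})$ contains Frobenius elements $\theta_\mf{p}$ with $\mf{p}\nmid \ell$. If $\mf{p}$ is inert in $KL$, the previous proposition gives $a_\mf{p}=0$. If $\mf{p}$ splits in $KL$, it gives $a_\mf{p}^2\equiv 4p^f\Mod{\ell}$. I will show that in the split case $\left(\frac{p}{\ell}\right)^f=1$. Since $f_{\mf{P}|\mf{p}}=1$ and $f_\mf{P}=f_{\mf{P}|\mf{q}}f_\mf{q}$, the argument of Proposition \ref{Prop: balanced trace KL} should apply, but $K/\QQ$ need not be Galois here, so I will argue through the Gr\"o\ss encharacter instead. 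We have $4p^f=u^2+\ell v^2$, so $4p^f$ is a nonzero square mod $\ell$, hence $p^f$ is a square. Then $a_\mf{p}\equiv \pm 2c$ with $c^2\equiv p^f$.

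The sign is not pinned down this way, so the image lies in $\{0\}\cup\{\pm 2c\}$. This is all of $\FF_\ell$ unless more constraints are used, and this is where heavenliness enters. By Proposition \ref{prop: heavenly upper triangular} the diagonal characters are $\chi^{i_1},\chi^{i_2}$ with $i_1+i_2\equiv 1$. Both are trivial on the index-$\le 2$ subgroup $G_{KL}$, where the Galois image is abelian, coming from $(\OO_L/\ell)^\times$.

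My plan is to write $\xi=\psi(\mf{P})\equiv u/2\Mod{\sqrt{-\ell}}$, using that the reduction of $\xi$ modulo $\sqrt{-\ell}$ gives a character into $\FF_\ell^\times$. That character equals $\chi^{i_1}$ and has square $\chi$, so $\chi^{i_1}=\chi^{(\ell+1)/4}$ up to a quadratic twist. Then $a_\mf{p}\equiv 2q^{(\ell+1)/4}$, which lies in $\left(\frac{2}{\ell}\right)\FF_\ell^{\times2}$. So the trace image is contained in $\left(\frac{2}{\ell}\right)\FF_\ell^{\times 2}\cup\{0\}$, a proper subset, exactly as in Proposition \ref{prop: heavenly not surj}.

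\emph{Non-surjective trace implies heavenly.} Good reduction away from $\ell$ is assumed, so by Lemma \ref{lem:heavenly equiv def} it remains to show $[K(E[\ell]):K(\bmu_\ell)]$ is a power of $\ell$. The image of $\rho_{E,\ell}$ lies in the normalizer of a Cartan subgroup $C=(\OO_L/\ell)^\times$. Because $\ell$ ramifies in $L$, $\OO_L/\ell\cong\FF_\ell[\epsilon]/(\epsilon^2)$, so $C\cong \FF_\ell^\times\times\FF_\ell$, which is upper triangular in a suitable basis. On the index-$\le2$ subgroup the image is therefore inside $\{a+b\epsilon\}$. The determinant is $a^2$, so $K(E[\ell])$ over $K(\bmu_\ell)$ is an extension by the $\pm1$ part and the unipotent part.

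The key step is to show the $\pm1$ ambiguity, together with the non-Cartan coset, is killed exactly when the trace misses some class. If the semisimple part $a$ were not determined by $\chi$, then by Chebotarev the traces $2a$ (and $0$ on the other coset) would cover all of $\FF_\ell$, since $a$ would run over all of $\FF_\ell^\times$. Contrapositively, non-surjectivity forces $a=\chi^{(\ell+1)/4}$ up to a fixed sign, that is, $a$ factors through $K(\bmu_\ell)$. The kernel of the map to $\Gal(K(\bmu_\ell)/K)$ is then unipotent, hence an $\ell$-group.

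I expect the main obstacle to be the coset of $G_K$ outside $G_{KL}$, where $K\cap L=\QQ$ and the image contains anti-linear elements. I will handle it by noting that $KL\subseteq K(\bmu_\ell)$, because $L\subset\QQ(\bmu_\ell)$. That coset is therefore already accounted for by the degree of $K(\bmu_\ell)/K$, and it does not contribute to $[K(E[\ell]):K(\bmu_\ell)]$.
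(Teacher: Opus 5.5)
Your proof is correct. The converse direction is essentially the paper's argument. On $G_{KL}$ the image lies in $\{a+b\epsilon\}$, traces vanish off $G_{KL}$, and $\psi^2=\chi|_{G_{KL}}$. Because $KL\subseteq K(\bmu_\ell)$, only a possible sign of $\psi$ on $G_{K(\bmu_\ell)}$ can obstruct heavenliness.

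The one step you leave implicit, ``$a$ would run over all of $\FF_\ell^\times$'', is exactly the paper's index computation and should be written out. We have $\ker\psi\subseteq\ker\chi|_{G_{KL}}=G_{K(\bmu_\ell)}$ and $[G_{KL}:G_{K(\bmu_\ell)}]=(\ell-1)/2$. Hence $\sharp\psi(G_{KL})$ is $(\ell-1)/2$ or $\ell-1$, and it is $(\ell-1)/2$ exactly when $\psi$ is trivial on $G_{K(\bmu_\ell)}$.

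The forward direction is where you genuinely differ. Write $M=KL$. The paper proves both implications at once from the single chain
$[K(E[\ell]):K(\bmu_\ell)]$ an $\ell$-power $\iff M^\psi=M(\bmu_\ell)\iff\sharp\psi(G_M)=(\ell-1)/2\iff$ the trace is proper.
This chain uses neither Proposition \ref{prop: heavenly upper triangular} nor the Gr\"o\ss encharacter, and the paper only afterwards, in the subsequent remark, pins down the trace set exactly.

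You instead feed in the upper-triangular shape of Proposition \ref{prop: heavenly upper triangular} to get $\psi=\chi^{i_1}|_{G_{KL}}$. This gives the explicit value $a_\mf{p}\equiv 2q^{(\ell+1)/4}$ at split primes and the containment in $\left(\frac{2}{\ell}\right)\FF_\ell^{\times2}\cup\{0\}$ directly. It also makes the analogy with totally balanced curves (Proposition \ref{prop: new trace} with $s=0$) explicit. The price is a heavier input and a Hecke-character detour you do not need. The repeated eigenvalue of $\rho_{E,\ell}(\theta_\mf{p})$ is $\chi^{i_1}(\theta_\mf{p})=q^{i_1}$ with $q$ a square. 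That already puts $a_\mf{p}$ in $2\FF_\ell^{\times2}=\left(\frac{2}{\ell}\right)\FF_\ell^{\times2}$, whatever $i_1$ is.

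Two slips to repair:
\begin{itemize}
\item ``Both are trivial on $G_{KL}$'' is false as written, since it would force $\chi|_{G_{KL}}=\psi^2=1$. You mean both diagonal characters restrict to the same $\psi$ there.
\item ``$\chi^{i_1}=\chi^{(\ell+1)/4}$ up to a quadratic twist'' would leave the sign of $a_\mf{p}$, hence its square class, undetermined. It is in fact an equality on $G_{KL}$: both characters take values in the odd-order group $\FF_\ell^{\times2}$ and have the same square.
\end{itemize}
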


We now employ the same strategy used in \cite[Theorem 7.1]{quadfields}, which extends to the present case. For the convenience of the reader, we will give the argument in full.

\begin{proof}
    As $E$ has good reduction away from $\ell$, we must only show that $[K(E[\ell]):K(\mu_\ell)]$ a power of $\ell$. 
    Let $L$ be the field of complex multiplication, and set $M\defeq KL$. 
    From Remark \ref{rmk: pick maxl CM}, we may assume $E$ has CM by the maximal order $\OO_L$.
    By \cite{BCS17}, $L \subseteq K(E[\ell])$.
    Hence $M(E[\ell]) = K(E[\ell])$. 
    The $j$-invariant for any elliptic curve with complex multiplication must be real, hence $K$ is a real field, and $[K(\mu_\ell) : K] = \ell-1$. 
    By Theorem \ref{prop: CM good red l 3}, $L = \QQ(\sqrt{-\ell})$ with $\ell \equiv 3 \Mod{4}$.
    It then must be true that $M = K(\sqrt{-\ell}) \subseteq K(\mu_\ell) = M(\mu_\ell)$ and $[M(\mu_\ell):M] = (\ell-1)/2$.

    Let $\rho = \rho_{E,\ell}$ be the $G_K$-representation on $E[\ell]$. 
    As $E$ has complex multiplication, $E[\ell]$ is a free rank one module over $\OO_L/\ell\OO_L$.
    Since $\ell$ ramifies in $L$, $\OO_L/\ell\OO_L \cong \FF_\ell [T]/(T^2)$. 
    Then
    \[\rho(G_K) \leq \Aut_{\OO_L/\ell\OO_L}\left(E[\ell]\right) \cong \left(\FF_\ell[T]/\left(T^2\right)\right)^\times = \{bT + a : b \in \FF_\ell, a \in \FF_\ell^\times\}.\]

    By definition $\ker(\rho) = G_{K(E[\ell])}$.
    Since $E$ has good reduction away from $\ell$ and complex multiplication there is an injection
    \[\overline{\rho} : \Gal(K(E[\ell])/M) \hookrightarrow \Aut_{\OO_L/\ell\OO_L}(E[\ell]).\]

    As $K \subseteq M$, then $G_M \leq G_K$ and $\rho(G_M) \subseteq \rho(G_K)$. Hence we can conclude that the image of $G_{M}$ under $\rho$ must also sit inside of $\Aut_{\OO_L/\ell\OO_L}(E[\ell])$. 
    We may then choose an ordered basis for $E[\ell]$ such that
    \[\rho(G_{M}) \leq \mathcal{C} := \left\{\begin{pmatrix}
        a & b\\ 0 & a
    \end{pmatrix} : b \in \FF_\ell, a \in \FF_\ell^\times \right\}.\]\cite[Thm. 1.1]{Lozano_Robledo_2022}
    Let $\mathcal{N}$ be the normalizer of $\mathcal{C}$ inside $\GL_2(\FF_\ell)$. 
    There exists a unique subgroup $\mathcal{N}'\leq \mathcal{N}$ with the property that $[\mathcal{N}':\mathcal{C}] = 2$.
    This subgroup has the form
    \[\mathcal{N}' : = \left\{\begin{pmatrix}
        \pm a & b\\ 0 & a
    \end{pmatrix} : b \in \FF_\ell, a \in \FF_\ell^\times \right\}.\]
    Further, by \cite[Lem. 6.2]{Lozano_Robledo_2022}, it must be that $\rho(G_K) \leq \mathcal{N}'$.
    Consequently, we can express $\rho$ using homomorphisms $\theta: G_K \to \FF_\ell^\times$ and $\varepsilon : G_K \to \{\pm 1\}$ such that
    \[\rho = \begin{pmatrix}
        \varepsilon\theta & \star \\ & \theta \end{pmatrix}, \quad \rho\big\vert_{G_{M}} = \begin{pmatrix}
            \psi & \star\\ & \psi \end{pmatrix}, \ \text{where}\ \psi : = \theta\big\vert_{G_{M}}.\]
    Further, these must satisfy the condition $\varepsilon\theta^2 = \det\rho = \chi$.
    Notice now that $\varepsilon$ is a cokernel character for the projection $G_K \twoheadrightarrow G_K/G_M$ as for any $\sigma \in G_K$, we have 
    \[\trace\rho(\sigma) = 0 \iff \varepsilon(\sigma) = -1 \iff \sigma \notin G_{M}.\]
    So $\trace\left(\rho(G_K)\right) = \trace\left(\rho(G_{M})\right)\cup \{0\}$.
    Then for any $\sigma \in G_{M}$, $\trace\rho(\sigma) = 2\cdot\psi(\sigma)$.
    But as 2 is invertible in $\FF_\ell$,
    \[\sharp(\trace\rho(G_{M})) = \sharp(2\cdot\psi(G_{M})) = \sharp\psi(G_{M}).\]
    We now check if $\psi$ is surjective.
    Set $M^\psi \defeq \overline{M}^{\ker\psi}$. 
    The condition $\psi^2 = \chi\big\vert_{G_{M}}$ gives the following inclusion of Galois groups:
    \[G_{M^\psi} = \ker\psi \leq \ker\psi^2 = \ker\chi\big\vert_{G_{M}} = G_{M(\mu_\ell)}.\]
    Thus $M(\mu_\ell) \subseteq M^\psi$. The representation $\rho\big\vert_{M^{\psi}}$ is unit upper triangular. 
    So the image $\rho(M^\psi)$ can only have order $1$ or $\ell$.
    Pulling back through the representation shows $[M(E[\ell]): M^\psi]$ must divide $\ell$. 
    We also have
    \[[M^\psi : M(\mu_\ell)] = \frac{[M^\psi: M]}{[M(\mu_\ell): M]} = \frac{\sharp\psi(G_{M})}{(\ell-1)/2}.\]
    The curve $E/K$ is heavenly if and only if $[K(E[\ell]):K(\mu_\ell)] = \ell$, which will happen if and only if $[M^\psi : M(\mu_\ell)] = 1$.
    This happens exactly when $\sharp\psi(G_{M}) = (\ell-1)/2$, implying $\psi$ is not surjective. 
\end{proof}

\begin{remark}
    It is worth noting here that Proposition \ref{prop: heavenly not surj} only shows $\trace\left(\rho(G_K)\right) \subseteq \left(\frac{2}{\ell}\right)\cdot \FF_\ell^{\times 2} \cup \{0\}$ for totally balanced curves.
    This does not imply that $\sharp\trace\left(\rho(G_K)\right) = (\ell-1)/2$ exactly.
    However, under the assumptions of Theorem \ref{thm: CMheavenly iff not sur} $E$ has complex multiplication and we show that no other order is possible.
    If $\sharp\trace\left(\rho(G_K)\right) = \sharp\psi(G_{M}) = (\ell-1)/n < (\ell-1)/2$, then
    \[\frac{[M^\psi: M]}{[M(\mu_\ell): M]} = \frac{(\ell-1)/n}{(\ell-1)/2} < 1\]
    which is impossible.
    Hence in the case where $E$ is heavenly at $\ell$ and has complex multiplication, $\trace\left(\rho(G_K)\right) = \left(\frac{2}{\ell}\right)\cdot \FF_\ell^{\times 2} \cup \{0\}$ exactly.
\end{remark}

\section{Lifting Finiteness Results}\label{sec:finite-fibers}

    When classifying heavenly curves, it is a justifiable choice to consider only $\Qbar$-isomorphism classes. If $E/K_0$ is a curve, and $K/K_0$ and $K'/K_0$ are two finite extensions, the classes $[E\times_{K_0}K]
_K$ and $[E\times_{K_0}K']_{K'}$ are incomparable. The aim of this section is to prove that $\Qbar$ classes do not collapse together infinitely many classes of elliptic curves over cubic fields. In particular, we show that for any fixed pair $([E]_K,\ell) \in \mathcal{H}(K,1)$ that there does not exist an infinite family $\{([E_i]_{K_i},\ell)\}_{i\in \NN}$ such that $[K_i:\QQ]=[K:\QQ]$, $([E]_{K_i},\ell) \in \mathcal{H}(K_i,1)$, and $[E\times_K \Qbar]_{\Qbar} = [E_i\times_{K_i}\Qbar]_{\Qbar}$.

Recall that for any number field $K$, dimension $g > 0$, and rational prime $\ell$, $\mathcal{H}(K,g,\ell)$ denotes the set of $K$-isomorphism classes of abelian varieties of dimension $g$ which are heavenly at the prime $\ell$. Let $n = [K:\QQ]$, and for any $B \geq 1$ set 
$$\mathcal{H}(n,1)_B\defeq \bigcup_{\ell\geq B}\bigcup_{\substack{K\\ [K:\QQ] = n}} \{([A]_K,\ell) \in \mathcal{H}(K,1,\ell)\}.$$

We define an analogous set to keep track of $\Qbar$-isomorphism classes: 
$$\overline{\mathcal{H}}(n,1)_B \defeq \left\{[A\times_K \Qbar]_{\Qbar} : ([A]_K,\ell) \in \mathcal{H}(n,1)_B\right\}.$$

In set $\mathcal{H}^\circ(n,1)_B$ defined in the introduction adds the additional conditions that $K/\QQ$ is Galois, and no $A/K$ is realized as the base change of some $A_0/\QQ$ to $K$. We also consider the set

$$\overline{\mathcal{H}}^\circ(n,1)_B \defeq \left\{[A\times_K \Qbar]_{\Qbar} : ([A]_K,\ell) \in \mathcal{H}^\circ(n,1)_B\right\}.$$

We have an equivalence relation on $\mathcal{H}(n,1)_B$ defined as follows. For any $([A]_K,\ell), ([A']_{K'},\ell)$ that 
$$([A]_K,\ell) \sim ([A']_{K'},\ell) \quad \text{ if and only if } \quad A\times_K\Qbar \cong_{\Qbar} A'\times_{K'}\Qbar.$$

In the introduction, one of the main problems introduced is Question \ref{introq: H finite}: is $\sharp \mathcal{H}^\circ (3,1)_{11} < \infty$? However we then cite \cite[Thm. 3.1]{quadfields}, which shows that there are only finitely many $\Qbar$ isomorphism classes of heavenly elliptic curves at some fixed $\ell \geq 11$. Hence we now show when we are able to lift a finiteness result from the set $\overline{\mathcal{H}}^\circ(n,1)_B$ to the set of interest, $\mathcal{H}^\circ(n,1)_B$.

$\sharp\overline{\mathcal{H}}^\circ(n,1)_B < \infty \implies \sharp\overline{\mathcal{H}}^\circ(n,1)_B < \infty$, we will show the following map has finite fibers:

$$\alpha: \mathcal{H}^\circ(n,1)_B\to \overline{\mathcal{H}}^\circ(n,1)_B \qquad \qquad ([A]_K,\ell) \mapsto ([A]_{\Qbar}, \ell)$$

Let $\mathcal{U}$ and $\mathcal{B}$ denote the unit upper-triangular and Borel subgroup of upper-triangular matrices inside of $\GL_2(\FF_\ell)$. Given a heavenly elliptic curve $A/K$, proposition \ref{prop: heavenly upper triangular} shows there exists a basis of $E[\ell]$ as an $\FF_\ell$ vector space such that with respect to that basis, $\rho_A(G_K) \leq \mathcal{B}$ and $\rho_A(G_{K(\bmu_\ell)}) \leq \mathcal{U}$.

\begin{lemma}\label{lem: exectional j lifting}
    For $n$ odd and $B > 3$, there do not exist any $([E]_{\Qbar},\ell) \in \overline{\mathcal{H}}^\circ(n,1)_B$ with $j := j(E) \in \{0,1728\}.$
\end{lemma}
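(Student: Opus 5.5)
Suppose for contradiction that $([A]_K,\ell) \in \mathcal{H}^\circ(n,1)_B$ with $n$ odd, $\ell \geq B > 3$ and $j(A) \in \{0,1728\}$. The plan is to produce a curve $E_0/\QQ$, heavenly at $\ell$, with $A\times_K\Qbar \cong E_0\times_\QQ\Qbar$. This contradicts condition (d) in the definition of $\mathcal{H}^\circ(n,1)_B$.

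\textbf{Step 1 (CM structure).} Curves with $j = 0$ or $1728$ have CM by $\ZZ[\zeta_3]$ or $\ZZ[i]$ respectively. These are maximal orders in $L = \QQ(\sqrt{-3})$ or $\QQ(i)$. Since $j \in \QQ$, we have $\QQ(j(A)) = \QQ \subseteq K$.

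\textbf{Step 2 (the CM field must be $\QQ(\sqrt{-\ell})$).} I will apply Proposition \ref{prop: CM good red l 3} to a model over $\QQ(j) = \QQ$, which has odd degree $1$. The difficulty is that $A$ itself lives over $K$. Over $K$, the curve $A$ is a twist of a fixed model $E_j/\QQ$ by a character of order dividing $4$ or $6$ (sextic or quartic twists). The key point is that $A$ has good reduction outside $\ell$ over $K$.

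\begin{itemize}
\item[(i)] First try to show that some twist $E_0/\QQ$ of $E_j$ has good reduction outside $\ell$, by descending the twisting character. Alternatively, work directly over $K$: run the genus-field and discriminant argument of Proposition \ref{prop: CM good red l 3} for $KL$ with $K$ of odd degree. That argument forces $L = \QQ(\sqrt{-\ell})$ with $\ell \equiv 3 \Mod{4}$.
\item[(ii)] For $j = 1728$ this is immediate, since $L = \QQ(i)$ can never equal $\QQ(\sqrt{-\ell})$.
\item[(iii)] For $j = 0$ we get $\ell = 3$, contradicting $\ell \geq B > 3$.
\end{itemize}

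\textbf{Step 2, simpler alternative.} Avoid twists by a local argument. A CM curve with $j \in \{0,1728\}$ over $K$ with $L \not\subseteq K$ (automatic, since $n$ is odd) has potentially good reduction everywhere. At primes $p \in \{2,3\}$ with $p \neq \ell$, the curve with $j = 0$ or $1728$ must have additive reduction over any field in which $p$ has odd ramification index. Here $e_\mathfrak{p}$ divides $n$, which is odd.

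Concretely, for $p = 2$ and $j = 1728$: the index of semistable reduction at primes above $2$ for such curves is divisible by $4$ (or $8$). This cannot be killed by an extension with odd ramification. A similar statement holds at $p = 3$ for $j = 0$. This would give bad reduction at a prime not above $\ell$, contradicting Lemma \ref{lem:heavenly equiv def}(a). Since $\ell > 3$, both $2$ and $3$ differ from $\ell$, so this covers both $j$-values.

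\textbf{Main obstacle.} The hard step is this local claim: that $E$ with $j = 1728$ (respectively $j = 0$) has bad reduction at every prime above $2$ (respectively $3$) over any field with odd ramification index there. I would prove it via the discriminant valuation. The minimal discriminant of any twist has valuation not divisible by $12/\gcd$ appropriately. Equivalently, the semistable-reduction index computed over $\QQ_2$ (respectively $\QQ_3$) is even, and an odd-degree ramified extension cannot reduce it to $1$. This makes the argument independent of Step 2's descent. The conclusion is then that $A$ is not heavenly at $\ell$, so no such class lies in $\overline{\mathcal{H}}^\circ(n,1)_B$.
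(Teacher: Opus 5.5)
\textbf{Gap in the key local step.} Your viable route is the ``simpler alternative'': show that $A$ has bad reduction above $2$ (for $j=1728$) or above $3$ (for $j=0$), contradicting Lemma \ref{lem:heavenly equiv def}(a). The local claim you need is true. However, the step you yourself flag as the main obstacle is not proved, and the method you propose for it fails exactly where it matters.

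At residue characteristic $2$ or $3$, the condition $v(\Delta)\equiv 0 \Mod{12}$ is necessary but not sufficient for good reduction.
\begin{itemize}
\item For $j=0$ at $3$ the idea happens to work. Every such curve over $F=K_{\mf{p}}$ is $y^2=x^3+b$, and $v_F(\Delta)=3e_{\mf{p}}+2v_F(b)$ is odd.
\item For $j=1728$ at $2$ it does not work. The curve $y^2=x^3+ax$ has $v_F(\Delta)=6e_{\mf{p}}+3v_F(a)$, which is $\equiv 0 \Mod{12}$ whenever $v_F(a)\equiv 2 \Mod{4}$. Concretely, $y^2=x^3+4x$ over $\QQ_2$ has $v_2(\Delta)=12$ but additive reduction: it is $2$-isogenous to $y^2=x^3-x$, of conductor $32$.
\end{itemize}
Your fallback, that the semistable index ``computed over $\QQ_2$'' is even and survives odd ramified extensions, is only asserted. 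As phrased, it also covers only base changes of $\QQ_2$-curves, while $A$ is a quartic twist over $K_{\mf{p}}$ by a class that need not descend.

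Step 2(i) does not close this gap either. The discriminant argument of Proposition \ref{prop: CM good red l 3} uses that $KL/L$ is unramified, because there $KL=L(j(E))$ is the Hilbert class field of $L$. That fails once $K\neq\QQ(j)=\QQ$. The descent of the twisting character is also never carried out.

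\textbf{The missing idea and the fix.} The missing idea is global and short.
\begin{itemize}
\item By \cite[Lem. 3.15]{BCS17}, $L\subseteq K(A[\ell])$.
\item Let $p$ be the prime ramified in $L$, namely $p=2$ for $L=\QQ(i)$ and $p=3$ for $L=\QQ(\sqrt{-3})$. Suppose $A$ had good reduction at some $\mf{p}\mid p$.
\item Then N\'eron--Ogg--Shafarevich makes $K(A[\ell])/K$, and hence $KL/K$, unramified at $\mf{p}$.
\item So every prime of $KL$ above $\mf{p}$ would have absolute ramification index $e_{\mf{p}}$, which is odd.
\item But that index must be divisible by the ramification index $2$ of $p$ in $L$, a contradiction.
\end{itemize}

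With this patch your proof is correct and genuinely different from the paper's. The paper never uses reduction. It uses condition (b) of Lemma \ref{lem:heavenly equiv def}, that $[K(E[\ell]):K(\bmu_\ell)]$ is odd, together with $L\subseteq K(E[\ell])$, to force $L\subseteq K(\bmu_\ell)$. This contradicts the oddness of $[K(\bmu_\ell):\QQ(\bmu_\ell)]$, since $L\not\subseteq\QQ(\bmu_\ell)$ for $\ell>3$. Your version uses only the good-reduction half of the heavenly condition. Both arguments hinge on \cite[Lem. 3.15]{BCS17}, and neither needs condition (d), so your announced plan of producing a heavenly $E_0/\QQ$ is unnecessary.
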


\begin{proof}
    Suppose there did exist some $([E]_{\Qbar},\ell) \in \overline{\mathcal{H}}^\circ(n,1)_B$ such that $j = 0$. Then there must exist some representative $E'/K$ in the $\Qbar$ isomorphism class of $E$ where $K$ is a Galois field of degree $n$, and $E'$ has CM field $\QQ(\bmu_6)$. For ease of notation, we will simply let $E$ be the correct representative. Then by \cite[Lem. 3.15]{BCS17}, it must be that $K(\bmu_6) \subseteq K(E[\ell])$.

    Let $\rho_E$ be the $G_K$-representation on the $\ell$-torsion of $E$, and let $\Gamma \defeq \Gal(K(E[\ell])/K)$. Since $E$ is heavenly at $\ell$ over $K$, we have that $\Gamma \cong \rho_E(G_K) \leq \mathcal{B}$ by Prop. \ref{prop: heavenly upper triangular}.

   Consider the subgroup $\Gamma_1 = \Gamma \cap\mathcal{U}$. It must be that $K(\bmu_\ell) = K(E[\ell])^{\Gamma_1}$, as $\rho_E(G_{K(\bmu_\ell)}) \leq \mathcal{U}$. Now,

   $$\sharp \Gamma_1 = [K(E[\ell]):K(\bmu_\ell)] = [K(E[\ell]):K(\bmu_6,\bmu_\ell)][K(\bmu_6,\bmu_\ell):K(\bmu_\ell)]$$

   Note that $[K(\bmu_6,\bmu_\ell):K(\bmu_\ell)] \leq [\QQ(\bmu_6): \QQ] = 2$. In addition, $\sharp\mathcal{U} = \ell$, so $\sharp\Gamma_1 = [K(E[\ell]):K(\bmu_\ell)]$ must be odd. This lets us conclude that $[K(\bmu_6,\bmu_\ell):K(\bmu_\ell)] = 1$ and hence $K(\bmu_6) \subseteq K(\bmu_\ell)$. We will show this conclusion leads to a contradiction. 

   As $\ell \geq B > 3$ then $\bmu_6 \not\subseteq \bmu_\ell$ and it must be that $[\QQ(\bmu_6,\bmu_\ell):\QQ(\bmu_\ell)] = 2$.

   Let $M = \QQ(\bmu_\ell) \cap K$, and let $r = [M:\QQ]$. Then,

   \begin{align*}
       [K(\bmu_\ell):\QQ] & = \frac{[K:\QQ][\QQ(\bmu_\ell):\QQ]}{[M:\QQ]}\\
       & = \frac{n(\ell-1)}{r}\\
       & = [K(\bmu_\ell):\QQ(\bmu_\ell)][\QQ(\bmu_\ell):\QQ]
   \end{align*}

   So $[K(\bmu_\ell):\QQ(\bmu_\ell)] = \frac{n}{r}$ which must be odd.

   However, $[K(\bmu_\ell) : \QQ(\bmu_\ell)] = [K(\bmu_\ell):\QQ(\bmu_6,\bmu_\ell)][\QQ(\bmu_6,\bmu_\ell):\QQ(\bmu_\ell)] = 2[K(\bmu_\ell):\QQ(\bmu_6,\bmu_\ell)]$, a contradiction.

We will now proceed with a similar proof for the case $j = 1728$. Suppose $([E]_{\Qbar}, \ell) \in \overline{\mathcal{H}}^\circ(n,1)_B$ with $j = 1728$. Again, for ease of notation we will let $E/K$ be the representative from the $\Qbar$ isomorphism class with CM field $\QQ(\bmu_4)$ and $K/\QQ$ a Galois extension of degree $n$. As $\ell \geq B > 3$, \cite[Lem. 3.15]{BCS17} shows $K(\bmu_4) \subseteq K(E[\ell])$.

As above let $\rho_E$ be the $G_K$-representation on the $\ell$-torsion of $E$, and let $\Gamma \defeq \Gal(K(E[\ell])/K)$. As $E/K$ is heavenly at $\ell$, Proposition \ref{prop: heavenly upper triangular} shows $\Gamma \cong \rho_E(G_K) \leq \mathcal{B}$. Consider $\Gamma_1 = \Gamma\cap \mathcal{U}$. As $\rho_E(G_{K(\bmu_\ell)}) \leq \mathcal{U}$, it must be that $K(\mu_\ell) = K(E[\ell])^{\Gamma_1}$. So,

\[\sharp\Gamma_1 = [K(E[\ell]):K(\bmu_\ell)] = [K(E[\ell]): K(\bmu_4, \bmu_\ell)][K(\bmu_4, \bmu_\ell):K(\bmu_\ell)]\]

As $[K(\bmu_4, \bmu_\ell):K(\bmu_\ell)] \leq [\QQ(\bmu_4):\QQ] =2$ and $\sharp\Gamma_1 = [K(E[\ell]):K(\bmu_\ell)]$ is odd, then $[K(\bmu_4, \bmu_\ell):K(\bmu_\ell)] =1$ and $K(\bmu_4) \subseteq K(\bmu_\ell)$. 

Note that as $\ell > 3$ $\bmu_4 \not\subseteq \bmu_\ell$ and hence $[\QQ(\bmu_4, \bmu_\ell): \QQ(\bmu_\ell)] =2$. Set $M \defeq \QQ(\mu_\ell)\cap K$ and let $r = [M:\QQ]$. Then,

\begin{align*}
       [K(\bmu_\ell):\QQ] & = \frac{[K:\QQ][\QQ(\bmu_\ell):\QQ]}{[M:\QQ]}\\
       & = \frac{n(\ell-1)}{r}\\
       & = [K(\bmu_\ell):\QQ(\bmu_\ell)][\QQ(\bmu_\ell):\QQ]
   \end{align*}

   So $[K(\bmu_\ell):\QQ(\bmu_\ell)] = \frac{n}{r}$ which must be odd.

   However, $[K(\bmu_\ell) : \QQ(\bmu_\ell)] = [K(\bmu_\ell):\QQ(\bmu_4,\bmu_\ell)][\QQ(\bmu_4,\bmu_\ell):\QQ(\bmu_\ell)] = 2[K(\bmu_\ell):\QQ(\bmu_4,\bmu_\ell)]$, a contradiction.
\end{proof}

For the next proposition, we must reduce to the case where the field of definition of the elliptic curves is of prime degree over $\QQ$. 

The following proof is based on \cite[Prop. 7]{MRequivalentConj}

\begin{proposition}\label{prop: finite fibers heavenly ex}
    Suppose $\overline{E}/\Qbar$ is an elliptic curve, $([\overline{E}]_{\Qbar},\ell) \in \overline{\mathcal{H}}^\circ(p,1)_B$ for $p$ prime. Suppose $B > p$, and $j:= j(E) \notin \{0,1728\}.$ Let $X$ be the fiber of $\alpha$ over $([\overline{E}]_{\Qbar},\ell)$.

    \begin{enumerate}
        \item[(a)] Suppose $K',K''$ are distinct fields of degree $p$ and $A'/K'$, $A''/K''$ are elliptic curves satisfying $([A']_{K'},\ell),([A'']_{K''},\ell) \in X$. Then $j(\overline{E}) \in \QQ$.
        \item[(b)] Let $A_0/\QQ$ be any elliptic curve with $j(\overline{E}) = j(A_0)$. for every $([A]_K,\ell)\in X$, it holds that $K \subseteq \QQ(A_0[\ell])$.
    \end{enumerate}
    In particular, the set $X$ must be finite.
\end{proposition}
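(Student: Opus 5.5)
For (a), I would use that $j := j(\overline{E})$ is a field invariant of $\Qbar$-classes. If $A'/K'$ lies in the class of $\overline{E}$, then $j = j(A') \in K'$. Likewise $j \in K''$. Hence $j \in K' \cap K''$. Since $K'$ and $K''$ are distinct fields of prime degree $p$, the intersection $K'\cap K''$ is a proper subfield of $K'$. Because $[K':\QQ]=p$ is prime, this forces $K'\cap K'' = \QQ$. So $j \in \QQ$. This is purely a degree argument and needs nothing about heavenliness.

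For (b), assume $j\in\QQ$, since otherwise $X$ lives over the single field $\QQ(j)$ and is trivially finite by Shafarevich. Choose $A_0/\QQ$ with $j(A_0)=j$. For $([A]_K,\ell)\in X$, the curves $A$ and $A_0\times_\QQ K$ are $\Qbar$-isomorphic. Since $j\notin\{0,1728\}$, $\Aut(A_{\Qbar})=\{\pm1\}$, so $A$ is a quadratic twist of $A_0\times K$ by some character $\varepsilon\colon G_K\to\{\pm1\}$. Then $\rho_{A,\ell} = \varepsilon\otimes\rho_{A_0,\ell}|_{G_K}$.

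I would compare projective images. Since $A$ is heavenly, Proposition~\ref{prop: heavenly upper triangular} puts $\rho_{A,\ell}(G_K)$ in $\mathcal{B}$ with $\rho_{A,\ell}(G_{K(\bmu_\ell)})\le\mathcal{U}$. Twisting by $\pm1$ preserves the projective image, so $\mathbb{P}\rho_{A_0,\ell}(G_K)$ is contained in the image of $\mathcal{B}$, which has order $\ell(\ell-1)$. Let $F = \QQ(A_0[\ell])$ and $G = \Gal(F/\QQ)$. Consider the subgroup $\Gal(F/F\cap K)$, whose index in $G$ divides $p$. I aim to show $K\subseteq F$, that is, $K\cap F = K$.

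If $K\cap F=\QQ$, then restriction gives $\rho_{A_0,\ell}(G_K)=\rho_{A_0,\ell}(G_\QQ)$. Then $A_0$ itself would have Borel mod-$\ell$ image over $\QQ$ with $\varepsilon$ absorbing the sign, so $A_0$ twisted by an appropriate quadratic character over $\QQ$ would be heavenly at $\ell$ over $\QQ$. Here I need to descend $\varepsilon$, and I would argue this using good reduction away from $\ell$ together with Lemma~\ref{lem:heavenly equiv def}. A heavenly twist over $\QQ$ contradicts condition (d) in the definition of $\mathcal{H}^\circ(p,1)_B$.

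So $K\cap F\ne\QQ$, and primality of $p=[K:\QQ]$ then gives $K\subseteq F$. The hypothesis $B>p$ is used to ensure $p\nmid \ell(\ell-1)$-type index issues do not break the argument, in particular that $\ell\ne p$. I expect the main obstacle to be rigorously showing that the twist $\varepsilon$ descends to $\QQ$, or else circumventing it. For the latter, I would first try to show that the order of $G$ and the order of $\rho_{A_0,\ell}(G_K)$ (up to $\pm$) must differ when $A_0$ has no heavenly twist over $\QQ$.

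Finiteness then follows. The field $F$ is a fixed number field depending only on $j$, so it has finitely many subfields $K$ of degree $p$. For each such $K$, Shafarevich (Faltings–Zarhin) gives finitely many $K$-classes with good reduction outside $\ell$. Combined with (a), this shows $X$ is finite.
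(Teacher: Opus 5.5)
Your part (a) and your finiteness deduction match the paper. Your skeleton for (b) is also the paper's: assume $K\cap F=\QQ$, use linear disjointness to get $\rho_{A_0,\ell}(G_\QQ)=\rho_{A_0,\ell}(G_K)$, then contradict condition (d) with a heavenly quadratic twist of $A_0$ over $\QQ$. But the step you flag as ``the main obstacle'' is the whole substance of (b), and you leave it open. Also, good reduction together with Lemma~\ref{lem:heavenly equiv def} is not what makes the descent work.

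Here is the missing idea. Write $\lambda$ for your twisting character on $G_K$. Since $\rho_{A_0,\ell}|_{G_K}=\lambda\,\rho_{A,\ell}$ and $-I\in\mathcal{B}$, in the basis from Proposition~\ref{prop: heavenly upper triangular} you get $\rho_{A_0,\ell}(G_\QQ)=\rho_{A_0,\ell}(G_K)\subseteq\mathcal{B}$. So $\rho_{A_0,\ell}$ is upper triangular on all of $G_\QQ$, with diagonal characters $\phi_1,\phi_2\colon G_\QQ\to\FF_\ell^\times$ satisfying $\phi_m|_{G_K}=\lambda\chi^{i_m}$. Put $\varepsilon_m:=\phi_m\chi^{-i_m}$. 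Each $\varepsilon_m$ factors through $\Gal(F/\QQ)$, since $\QQ(\bmu_\ell)\subseteq F$, and restricts to $\lambda$ on $G_K$. Because $G_K\to\Gal(F/\QQ)$ is surjective, it follows that $\varepsilon_1=\varepsilon_2=:\varepsilon$. This is a character of $G_\QQ$ with values in $\lambda(G_K)\subseteq\{\pm1\}$ that extends $\lambda$. Then $E_0:=A_0^{\varepsilon}$ satisfies $E_0\times_\QQ K\cong_K A$ and $\rho_{E_0,\ell}\sim\begin{pmatrix}\chi^{i_1}&\star\\0&\chi^{i_2}\end{pmatrix}$, so $[\QQ(E_0[\ell]):\QQ(\bmu_\ell)]$ divides $\ell$.

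The second gap is that good reduction of $E_0$ away from $\ell$ is not automatic; it must be proved. $E_0$ is only known to acquire good reduction over $K$, and at a prime $q\neq\ell$ ramified in $K$ it could a priori have additive reduction over $\QQ$. This is where $B>p$ is actually used, not for ``$p\nmid\ell(\ell-1)$-type index issues'' or to ensure $\ell\neq p$. In the paper's argument, $K\cdot\QQ(E_0[\ell^n])\subseteq K(A[\ell^\infty])$ is unramified over $K$ away from $\ell$. Hence the ramification index of $q$ in $\QQ(E_0[\ell^n])$ is at most $[K:\QQ]=p$. It also divides $[\QQ(E_0[\ell^n]):\QQ(\bmu_\ell)]$, which is a power of $\ell$, because $\QQ(\bmu_\ell)$ is unramified at $q$. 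Since $\ell\ge B>p$, the index is $1$. N\'eron--Ogg--Shafarevich then gives $([E_0]_\QQ,\ell)\in\mathcal{H}(\QQ,1)$ with $E_0\times\Qbar\cong\overline{E}$, contradicting (d).

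An alternative for this step: the image of inertia at $q$ lies in $\mathcal{U}$, because the diagonal characters are unramified at $q$. On the other hand, potentially good reduction makes that image a group of order dividing $24$ that embeds in $\GL(E_0[\ell])$, so it is trivial once $\ell\ge5$.

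Without these two steps, neither $K\subseteq\QQ(A_0[\ell])$ nor the finiteness of $X$ is established. A minor point: $F=\QQ(A_0[\ell])$ depends on the chosen $A_0$, not only on $j$, but fixing one $A_0$ suffices for your counting argument.
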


\begin{proof}
    First we will argue that $(a)$ and $(b)$ are sufficient to show the finiteness of the fiber $X$. For the sake of contradiction, suppose $(a)$ and $(b)$ hold, but $X$ is infinite. Each fixed field $K$ may only contribute finitely many pairs of the form $([A]_K,\ell) \in X$, as the Shafarevich Conjecture ensures that there are finitely many $K$-isomorphism classes of elliptic curves with good reduction away from $\ell$. So if $X$ is infinite, it is because infinitely many distinct fields of degree $p$ are contributing elements of $X$. By $(a)$ it must be that $j \in \QQ$, hence there does exist some model $A_0/\QQ$ with $j(A_0) = j$. However part $(b)$ implies that $K \subset \QQ(A_0[\ell])$. It cannot be true that a finite extension of $\QQ$ contains infinitely many distinct subfields, and hence it must be that $\sharp X < \infty$.

    To prove part $(a)$, we will note that $A'$ and $A''$ both living in the fiber of $([\overline{E}]_{\Qbar},\ell)$ means $j(A') = j = j(A'')$. Hence $j \in K'\cap K'' = \QQ$. It is exactly in this step where the reduction to prime degree fields in necessary.

    We now prove $(b)$. We may only assume that there exists some $A_0/\QQ$ with $j(A_0) = j$ if $j \in \QQ$. By $(a)$, this must hold any time there are two distinct fields of order $p$ who produce an element of the fiber. Clearly by the Shafarevich Conjecture, if all of the pairs $([A]_K, \ell) \in X$ have the same field $K$, then it must be that $X$ is finite.

    So we may take $A_0/\QQ$ with $j(A_0) = j$. Let $([A]_K, \ell) \in X$ and suppose by way of contradiction that $K \not\subseteq \QQ(A_0[\ell])$. It must then be that $A_0\times_\QQ K$ and $A$ are quadratic twists of each other, as they are curves with the same $j$-invariant both defined over the same field $K$. So we have some quadratic character $\lambda: G_K \to \{\pm1\}$ such that $(A_0\times_\QQ K)^\lambda \cong_K A$. The goal of this proof is to extend this to a quadratic character on $G_\QQ$ so that we generate a heavenly elliptic curve $E_0/\QQ$ which must also be in the fiber $X$.

    We have not assumed $A_0$ is heavenly at $\ell$, but $A$ is. We can then use the fact that $A_0$ is a twist of $A$ to deduce 

    $$\rho_{A_0,\ell}\big\vert_{G_K} = \rho_{A,\ell}^\lambda = \begin{pmatrix}
        \lambda\chi^{i_1} & \star\\ 0 & \lambda\chi^{i_2}
    \end{pmatrix}$$

    Let $G_0 \defeq \Gal (\QQ(A_0[\ell])/\QQ)$. $K$ and $\QQ (A_0[\ell])$ are linearly disjoint by assumption, so $G_0 \cong \Gal (K(A_0[\ell])/K)$. So, $\rho_{A_0}(G_\QQ) \cong \rho_{A_0}(G_K)$. In addition, by the above observation, $\rho_{A_0}(G_K) = \rho_A^\lambda(G_K) \leq \mathcal{B}$ up to conjugacy. We can then conclude that $\rho_{A_0}(G_\QQ) \leq \mathcal{B}$. Hence there must exist maps $\phi_i:G_\QQ \to \FF_\ell^\times$ such that 

    $$\rho_{A_0} \sim \begin{pmatrix}
        \phi_1 & \star \\ 0 & \phi_2
    \end{pmatrix}$$

    These characters must also have the property that for $m = 1,2$ $\phi_m\big\vert_{G_K} = \lambda\chi^{i_m}$, so we will set $\varepsilon_m \defeq \phi_m\chi^{-i_m}$. Note that $\varepsilon_m\big\vert_{G_K} = \lambda$. We can then study the following commutative diagram:

% https://q.uiver.app/#q=WzAsNCxbMSwwLCJHXzAiXSxbMCwwLCJHX0siXSxbMSwxLCJcXG1hdGhiYntGfV9cXGVsbF5cXHRpbWVzIl0sWzIsMCwiR19cXG1hdGhiYntRfSJdLFszLDAsIiIsMCx7InN0eWxlIjp7ImhlYWQiOnsibmFtZSI6ImVwaSJ9fX1dLFsxLDAsIiIsMix7InN0eWxlIjp7ImhlYWQiOnsibmFtZSI6ImVwaSJ9fX1dLFsxLDIsIlxcbGFtYmRhIiwyXSxbMywyLCJcXHZhcmVwc2lsb25fbSJdLFswLDJdXQ==
\[\begin{tikzcd}
	{G_K} & {G_0} & {G_\mathbb{Q}} \\
	& {\mathbb{F}_\ell^\times}
	\arrow[two heads, from=1-1, to=1-2]
	\arrow["\lambda"', from=1-1, to=2-2]
	\arrow[from=1-2, to=2-2]
	\arrow[two heads, from=1-3, to=1-2]
	\arrow["{\varepsilon_m}", from=1-3, to=2-2]
\end{tikzcd}\]

We now want to show that $\varepsilon_1 = \varepsilon_2$. Let $\sigma \in G_\QQ$ be an arbitrary element, and denote the image under the natural projection as $\overline{\sigma} \in G_0$. Then let $\tilde{\sigma} \in G_K$ any element which maps to $\overline{\sigma}$. But then $\varepsilon_1(\sigma) = \lambda(\tilde{\sigma}) = \varepsilon_2(\sigma)$. As this is true for an arbitrary element of $G_\QQ$, we can conclude $\varepsilon_1 = \varepsilon_2$. So we can remove the indexing and refer to this map as $\varepsilon$. This must be a quadratic character as $\varepsilon(G_\QQ) \subseteq \lambda(G_K) = \{\pm 1\}$. So for $m = 1,2$, $\phi_m = \varepsilon\chi^{i_m}$, and we may construct $E_0 \defeq A_0^\varepsilon$, a quadratic twist. This gives us the following description of $\ell$-torsion representations.

$$\rho_{A_0} = \begin{pmatrix} \phi_1 & \star \\ 0 & \phi_2 \end{pmatrix} = \begin{pmatrix} \varepsilon\chi^{i_1} & \star \\ 0 & \varepsilon\chi^{i_2} \end{pmatrix}, \qquad \rho_{E_0} = \begin{pmatrix} \chi^{i_1} & \star \\ 0 & \chi^{i_2} \end{pmatrix}.$$

However, we have also constructed $\varepsilon$ such that $\varepsilon\big\vert_{G_K} = \lambda$, so $(A_0\times_\QQ K)^\lambda \cong_K (A_0^\epsilon \times_\QQ K) = E_0 \times_\QQ K$. Hence we can conclude that $E_0 \times_\QQ \Qbar \cong_{\Qbar} \overline{E}$ and $K(A[\ell^\infty]) = K\cdot\QQ(E_0[\ell^\infty])$.

We now wish to argue that $E_0$ is heavenly at the prime $\ell$. By Lemma \ref{lem:heavenly equiv def}, we must show that $[\QQ(E_0[\ell]):\QQ(\bmu_\ell)]$ is $\ell$-power degree, and $E_0$ has good reduction away from $\ell$.

When we restrict $\rho_{E_0}$ to $G_{\QQ(\bmu_\ell)}$ it is unit upper triangular, so $[\QQ(E_0[\ell]):\QQ(\bmu_\ell)]\big\vert\ell$. Now let $p \neq \ell$ be a distinct prime. We will argue that $E_0$ must have good reduction at $p$, using the criteria of N\'{e}ron-Ogg-Sharafevich \cite{Serre-Tate}, we will argue that $p$ is unramified in $\QQ(E_0[\ell^\infty])\QQ$. 

Arguing that $p$ is unramified for some arbitrary $\ell^n$ division field, will be sufficient to show $p$ is unramified in the composite $\QQ(E_0[\ell^\infty])$. So let $n > 1$ be an integer; set $F = \QQ(E_0[\ell^n])$ and $\mf{p}$ a prime of $F$ above $p$. Clearly $KF \subseteq K(A[\ell^\infty])$. As $A$ is heavenly at $\ell$, $KF/K$ must be unramified away from $\ell$. Hence any ramification of the prime $p$ is controlled by $K$. So $e_{\mf{p}\vert p} \leq [K:\QQ] = p$. In addition, $\QQ(\bmu_\ell)/\QQ$ is unramified at $p$ so $e_{\mf{p}\vert p}$ must divide $[F:\QQ(\bmu_\ell)]$, an $\ell$ extension.

However, we have assumed at the start of this proof that $\ell \geq B > p$. Hence the only way for $e_{\mf{p}\vert p}$ to be a power of $\ell$ \textit{and} less than or equal to $p$ is if $e_{\mf{p}\vert p} = 1$ as desired. This is sufficient to show $([E_0]_\QQ,\ell) \in \mathcal{H}(\QQ,1)$. This would then tell us that $([\overline{E}]_{\Qbar},\ell) \in \overline{\mathcal{H}}(\QQ,1)$, which contradicts the assumption that $([\overline{E}]_{\Qbar},\ell) \in \overline{\mathcal{H}}^\circ(p,1)_B$.
\end{proof}

\newpage
\section{Table of Non-Balanced Cases}
{\small
\begin{table}[!h]
    \centering
    \begin{tabular}{|c|c|c|}
    \hline
        $\ell$ & $(j_1,j_2)$ & $(i_1,i_2)$ \\ \hline
        13 & (0,1) & (0,1)\\
        {} & (0,2) & (0,1), (6,7)\\
        {} & (0,3) & (0,1), (4,9), (8,5)\\
        {} & (0,4) & (0,1), (9,4), (6,7), (3,10)\\
        {} & (0,6) & (0,1), (4,9), (8,5), (6,7), (10,3), (2,11)\\
        {} & (0,8) & (0, 1), (9, 4), (6, 7), (3, 10)\\
        {} & (0,9) & (0, 1), (4, 9), (8, 5)\\
        {} & (3,6) & (3, 10), (7, 6), (11, 2)\\
        {} & (0,12) & (0, 1), (4, 9), (8, 5), (9, 4), (1, 0), (5, 8), (6, 7), (10, 3), (2, 11), (3, 10), (7, 6), (11, 2)\\
        {} & (0,18) & (0, 1), (4, 9), (8, 5), (6, 7), (10, 3), (2, 11)\\
        {} & (6,12) & (9, 4), (1, 0), (5, 8), (3, 10), (7, 6), (11, 2)\\\hline
        19 & (0,3) & (6,13)\\
        {} & (0,6) & (6, 13), (15, 4)\\
        {} & (0,9) & (2, 17), (4, 15), (14, 5), (6, 13), (8, 11)\\
        {} & (0,12) & (6, 13), (15, 4)\\
        {} & (0,18) & (2, 17), (4, 15), (14, 5), (6, 13), (8, 11), (11, 8), (13, 6), (5, 14), (15, 4), (17, 2)\\\hline
        23 & (1,8) & (5,18)\\
        {} & (2,7) & (10,13)\\
        {} & (2,10) & (2,21), (13,10)\\
        {} & (2,16) & (16,7), (5,18)\\
        {} & (4,14) & (10,13), (21, 2)\\ \hline
        29 & (1,2) & (19,10)\\
        {} & (0,4) & (21,8), (7,22)\\
        {} & (2,4) & (5,24), (19,10)\\
        {} & (0,8) & (21, 8), (7, 22)\\
        {} & (1,8) & (25,4)\\
        {} & (2,7) & (22,7)\\
        {} & (3,6) & (19,10)\\
        {} & (0,12) & (21,8), (7,22)\\
        {} & (4,8) & (12,17), (5,24), (26,3), (19,10)\\
        {} & (2,16) & (25,4), (11,18)\\
        {} & (4,14) & (8,21), (22,7)\\
        {} & (6,12) & (5, 24), (19, 10)\\ \hline
        37 & (0,3) & (12,25) \\
        {} & (0,4) & (9, 28), (27, 10)\\
        {} & (0,6) & (12,25), (30,7)\\
        {} & (0,8) & (9, 28), (27, 10)\\
        {} & (0,9) & (20, 17), (32, 5), (8, 29), (12, 25), (16, 21), (28, 9), (4, 33)\\
        {} & (0,12) & (12, 25), (21, 16), (30, 7), (3, 34), (9, 28), (33, 4), (27, 10), (15, 22)\\
        {} & (0,18) & (20, 17), (32, 5), (8, 29), (2, 35), (14, 23), (26, 11), (12, 25), (16, 21), (30, 7),\\
        {} & {} & (34, 3), (28, 9), (4, 33), (10, 27), (22, 15)\\\hline
        43 & (2,6) & (16,27), (37,6)\\\hline
        47 & (2,6) & (6, 41), (29, 18)\\
        {} & (1,8) & (41, 6)\\
        {} & (2,16) & (18,29), (41,6)\\ \hline
        59 & (2,7) & (26, 33) \\
        {} & (4,14) & (26,33), (55,4) \\ \hline
        73 & (0,9) & (56, 17), (32, 41), (8, 65)\\
        {} & (0,18) & (56, 17), (32, 41), (8, 65), (20, 53), (68, 5), (44, 29)\\ \hline
        
    \end{tabular}
    \caption{Non-Balanced Cases from Prop. \ref{prop: B(3,1)}}
    \label{tab:survivingcase}
\end{table}}

\clearpage

\printbibliography

\end{CJK*}

\end{document}

%%% Local Variables:
%%% mode: LaTeX
%%% TeX-master: t
%%% End: